\documentclass{amsart}

\usepackage{srollens-en}

\renewcommand{\lg}{\ensuremath{\mathfrak g}}
\newcommand{\lh}{\mathfrak h}
\newcommand{\einsnull}[1]{{{#1}^{1,0}}}
\newcommand{\nulleins}[1]{{{#1}^{0,1}}}

\newcommand{\I}{{i}}

\title[Dolbeault cohomology of nilmanifolds]{Dolbeault cohomology of nilmanifolds with left-invariant complex structure}

\author{S\"onke Rollenske}

\address{Dr. S\"onke Rollenske\\
Mathematisches Institut \\							
 Rheinische Fried\-rich{\-}-Wil\-helms-Uni\-versi\-t\"at Bonn \\
 Endenicher Allee 60  \\
 53115 Bonn, 
Germany}
\email{srollens@math.uni-bonn.de}

\begin{document}

\begin{abstract}
We discuss the known evidence for the conjecture that the Dolbeault cohomology of nilmanifolds with left-invariant complex structure can be computed as Lie-algebra cohomology and also mention some applications. 
\end{abstract}

\maketitle

\section{Introduction}

Dolbeault cohomology is one of the most fundamental holomorphic invariants of a complex manifold $X$ but in general it is quite hard to compute. If $X$ is K\"ahler then this amounts to describing the decomposition of the de Rham cohomology
\[H^k_{dR}(X,\IC)=\bigoplus_{p+q=k} H^{p,q}(X) =\bigoplus_{p+q=k}H^q(X, \Omega^p_X)\]
but in general there is only a spectral sequence connecting these invariants.

One case where at least de Rham cohomology is easily computable is the case of nilmanifolds, that is, compact quotients of real nilpotent Lie groups. If $M=\Gamma\backslash G$ is a nilmanifold and $\lg$ is the associated nilpotent Lie algebra Nomizu proved that we have a natural isomorphism
\[ H^*(\lg, \IR) \isom H^*_{\mathrm{dR}}(M, \IR)\]
where the left hand side is the Lie-algebra cohomology of $\lg$. In other words, computing the cohomology of $M$ has become a matter of linear algebra

There is a natural way to endow an even-dimensional nilmanifold with an almost complex structure: choose any endomorphism $J:\lg\to \lg$ with $J^2=-\id$ and extend it to an endomorphism of $TG$, also denoted by $J$, by left-multiplication. Then $J$ is invariant under the action of $\Gamma$ and descends to an almost complex structure on $M$. If $J$ satisfies the integrability condition
\begin{equation}\label{nijenhuis}
 [x,y]-[Jx,Jy]+J[Jx,y]+J[x,Jy]=0 \text{ for all } x,y \in \lg
\end{equation}
then, by Newlander--Nirenberg \cite[p.145]{Kob-NumII}, it makes $M_J=(M,J)$ into a complex manifold.

In this survey we want to discuss the conjecture
\begin{center}
\emph{ The Dolbeault cohomology of a nilmanifold with left-invariant complex structure $M_J$ can be computed using only left-invariant forms.} 
\end{center}
This was stated as a question in \cite{cfgu00, con-fin01} but we decided to call it Conjecture in the hope that it  should motivate other people to come up with a proof or a counterexample.
A more precise formulation in terms of Lie-algebra cohomology is given in Section \ref{reminder}. 

Before concentrating on this topic we would like to indicate why nilmanifolds have attracted much interest over the last years. Their main feature is that the construction and study of left-invariant geometric structures on them usually boils down to  finite dimensional linear algebra. On the other hand, the structure is sufficiently flexible to allow the construction of many exotic examples. We only want to mention the three most prominent in complex geometry:
\begin{itemize}
\item If $G$ is abelian then $M_J$ is a complex torus. 
\item The Iwasawa manifold $X=\Gamma\backslash G$ is obtained as the quotient of the complex Lie group
 \[G=\left\{ \begin{pmatrix}1 & z_1 &z_3\\ 0&1 &z_2\\ 0&0&1\end{pmatrix}\right\}\subset \mathrm{Gl}(3,\IC)\]
by the lattice $\Gamma=G\cap \mathrm{Gl}(3,\IZ[\I])$ and as such is complex parallelisable. Nakamura studied its small deformations and thus showed that a small deformation of a complex parallelisable manifold need not be complex parallelisable \cite{nakamura75}.

Observe that $X$ cannot be K\"ahler since $dz_3-z_2dz_1$ is a holomorphic 1-form that is not closed.

\item Kodaira surfaces, also known as Kodaira-Thurston manifolds, had appeared in Kodaira's classification of compact complex surfaces as non-trivial principal bundle of elliptic curves over an elliptic curve \cite{kodaira66} and were later considered independently by Thurston as the first example of a manifold that admits both a symplectic and a complex structure but no K\"ahler structure. In our context it can be described as follows: let
\[G=\left\{ \begin{pmatrix}1 & \bar z_1 &z_2\\ 0&1 & z_1\\ 0&0&1\end{pmatrix}\mid z_1, z_2\in \IC\right\}\subset \mathrm{Gl}(3,\IC)\]
and $\Gamma=G\cap \mathrm{Gl}(3,\IZ[\I])$. Then $G\isom \IC^2$ with coordinates $z_1, z_2$ and the action of $\Gamma$ on the left is holomorphic; the quotient is a compact complex manifold. If we set $\alpha= dz_1\wedge(d\bar z_2- z_1 d\bar z_1)$ then $\alpha+\bar \alpha$ is a  left-invariant symplectic form on $G$ and thus descends to the quotient.
\end{itemize}
In fact, the first example is the only nilmanifold that can admit a K\"ahler structure \cite{ben-gor88}, so none of the familiar techniques available for K\"ahler manifolds will be useful in our case.

Some more applications in complex geometry will be given in Section \ref{apps}. Nilmanifolds also play a role in hermitian geometry \cite{ags01, bdv09, lauret06}, riemannian geometry \cite{gromov78, buser-karcher81}, ergodic theory \cite{host-kra05}, arithmetic combinatorics \cite{green-tao06}, 
and theoretical physics \cite{MR2542937, gmpt07}.

In order to discuss the above conjecture on Dolbeault cohomology we start by sketching the proof of Nomizu's theorem because some of the ideas carry over to the holomorphic setting. Then we recall the necessary details on Dolbeault cohomology to give a precise statement of the conjecture.
It turns out that we are in a good position to prove the conjecture whenever we can inductively decompose the nilmanifold with left-invariant complex structure into simpler pieces. This is due to Console and Fino \cite{con-fin01}, generalising previous results of Cordero, Fern\'andez, Gray and Ugarte \cite{cfgu00}. 

Section \ref{new} contains the only new result in this article. We prove that the conjecture always holds true if we pass to a suitable quotient of the nilmanifold with left-invariant complex structure and also discuss some possible approaches to attack the general case.

\subsection{Notations}
Throughout the paper $G$ will be a simply connected nilpotent real Lie-group with Lie-algebra $\lg$. Every nilpotent Lie group can be realised as a subgroup of the group of upper triangular matrices with 1's on the diagonal.

 We will always assume that $G$ contains a lattice $\Gamma$ thus giving rise to a (compact) nilmanifold $M=\Gamma\backslash G$. Elements in $\lg$ will usually be interpreted as left-invariant vector fields on $G$ or on $M$.
We restrict our attention to those complex structures on  $M$ that are induced by an  integrable left-invariant complex structure on $G$ and are thus uniquely determined by an (integrable) complex structure  $J:\lg\to \lg$. The resulting complex manifold is denoted $M_J$. Note that even on a real torus of even dimension at least 6 there are many complex structures that do not arise in this way \cite{catanese02}.

The group $G$ is determined up to isomorphism by the fundamental group of $M$ \cite[Corollary 2.8, p.45]{VinGorbShvart} and by abuse of notation we sometimes call $\lg$ the Lie-algebra of $M$.

\section{Real nilmanifolds and Nomizu's result on de Rham cohomology}
The aim of this section is to prove Nomizu's theorem.
\begin{theo}[{Nomizu \cite{nomizu54}}]\label{nomizu}
 Let $M$ be a compact nilmanifold. Then the inclusion of left-invariant differential forms in the de Rham complex
\[\Lambda^\bullet \lg^*\into \ka^\bullet(M)\] induces an isomorphism between the Lie-algebra cohomology of $\lg$ and the de Rham cohomology of $M$, 
\[ H^*(\lg, \IR) \isom H^*_{\mathrm{dR}}(M, \IR).\]
\end{theo}
Since some of the main results on Dolbeault cohomology discussed in the next section rely on similar ideas we will examine the proof in some detail: at its heart lies an inductive argument.

Let $M=\Gamma\backslash G$ be a real nilmanifold with associated Lie algebra $\lg$ and let $\kz G$ be the centre of $G$. By \cite[p. 208]{Cor-Green}, $\kz\Gamma=\Gamma\cap \kz G$ is again a lattice and  the projection $G\to G/\kz G$ descends to a fibration $M\to M'$. The fibres are real tori $T=\kz G/\kz\Gamma$. Since elements in $\kz G$ commute with elements in $\Gamma$ their action descends to the quotient and $M\to M'$ is   a principal $T$-bundle.

To iterate this process we recall the following definition.
 \begin{defin}\label{ZgCg}
For a Lie-algebra $\lg$ we call
\[\kz^0\lg:= 0, \qquad \kz^{i+1} \lg := \{ x\in \lg \mid [x,\lg ]\subset  \kz^{i}\lg\}\]
the ascending central series and 
\[\kc^0\lg :=\lg, \qquad \kc^{i+1}\lg := [\kc^{i}\lg, \lg]\]
the descending central series of $\lg$.

The Lie-algebra is called nilpotent if there is a $\nu\in \IN$ such that $\kz^\nu\lg=\lg$, or equivalently $\kc^\nu\lg=0$.
 The minimal such $\nu=\nu(\lg)$ is called the index of nilpotency or step-length of $\lg$.
\end{defin}
The same definition can be made on the level of the Lie-group $G$ and the resulting sub-algebras and subgroups correspond to each other under the exponential map.

Proceeding inductively, we can use the first filtration on $\lg$  to decompose  $M$ geometrically; the second one induces a similar decomposition since $\kc^i\lg\subset \kz^{\nu-i}\lg$. More precisely, if we denote by $T_i$ the torus obtained as a quotient of $\kz^i G /\kz^{i+1} G$ by $\kz^i \Gamma /\kz^{i+1} \Gamma$ then there is a tower
\begin{equation}\label{tower}
{\xymatrix{ T_1 \ar@{^(->}[r] & M_1 \ar[d]^{\pi_1}\\
T_2 \ar@{^(->}[r] & M_2\ar[d]^{\pi_2}\\
&\vdots\ar[d]\\
T_{\nu-1}\ar@{^(->}[r] & M_{\nu-1}\ar[d]^{\pi_{\nu-1}}\\
& M_\nu} }
\end{equation}
and each $\pi_i:M_i\to M_{i+1}$ is a $T_i$-principal bundle.

This geometric description is crucial in the proof of  Nomizu's Theorem. The underlying idea is quite simple: we perform  induction over the index of nilpotency $\nu$. If $\nu=1$, i.e., $\lg$ is abelian, then $M$ is a torus and the result is well known. For the induction step, we consider $M$ as a principal torus bundle over a
nilmanifold $M'$ with lower nilpotency index. Then we have to combine our knowledge of the cohomology of the fibre and of the base to describe the cohomology of the total space $M$. This is achieved by  means of two spectral sequences, the Leray-Serre spectral sequence and the Serre-Hochschild spectral sequence.

Let us work this out a bit more in detail starting on the geometric side: let $\ka^k(M)$ be the the space of smooth differential $k$-forms on $M$ and consider the de Rham complex
\begin{equation*}
 0\to \ka^0(M)\overset{d}{\longrightarrow}\ka^1(M)\overset{d}{\longrightarrow}\dots \overset{d}{\longrightarrow}\ka^{n}(M)\to 0.
\end{equation*}

The principal bundle $\pi: M\to M'$ with fibre $T$ induces an inclusion $\pi^*\ka^1(M')\into \ka^1(M)$ and thus a filtration of $\ka^k(M)$ whose graded pieces are generated by forms of the type $(\pi^*\alpha)\wedge \beta$ where $\beta$ is a differential form along the fibres.
Decomposing also the differential and starting with the vertical component we have constructed a version of the Leray Serre spectral sequence
\[ E_2^{p,q}=H^p(M', H^q(T, \IR))\implies H^{p+q}_{dR}(M).\]
In the general case the $E_2$-term has to be interpreted as cohomology with values in a local system but since we have a principal bundle with connected structure group the monodromy action on $H^q(T, \IR)$ is trivial and we have 
$E_2^{p,q}=H^p_{dR}(M')\tensor H^q_{dR}(T)$.

Now we repeat the construction on the level of left-invariant forms. Consider $\Lambda^\bullet \lg^*$ as a subcomplex of the de Rham complex $(\ka^\bullet, d)$. The differential of a $k$-form $\alpha$ can be defined entirely in terms of the Lie-bracket and the Lie-derivative as
\begin{multline*}\label{ch-diff}
(d_k\alpha)(x_1, \dots , x_{k+1}):=\sum_{i=1}^{k+1} (-1)^{i+1} x_i (\alpha(x_1, \dots ,\hat x_i, \dots , x_{k+1}))\\
+\sum_{1\leq i <j\leq k+1} (-1)^{i+j} \alpha([x_i,x_j], x_1, \dots, \hat x_i, \dots, \hat x_j,\dots , x_{k+1}).
\end{multline*}
For left-invariant $\alpha\in \Lambda^k\lg^*$ and  $x_i\in \lg$ it reduces to 
\begin{equation*} 
(d_k\alpha)(x_1, \dots , x_{k+1})=\sum_{1\leq i <j\leq k+1} (-1)^{i+j} \alpha([x_i,x_j], x_1, \dots, \hat x_i, \dots, \hat x_j,\dots , x_{k+1})
\end{equation*}
and the  complex $(\Lambda^\bullet \lg^*, d)$ is defined purly algebraically. It is known as Chevalley complex \cite{che-eil48} and computes the Lie-algebra cohomology of $\lg$ (see also \cite[Chapter 7]{Weibel}).

If the fibration $\pi:M\to M'$ corresponds to  the short exact sequence
\[0\to \lh\to \lg\to \lg/\lh\to 0\]
where $\lh=\kz\lg$ as explained above then the dual sequence induces a filtration on the exterior powers $\Lambda^k\lg^*$ and we can organise the graded pieces into a spectral sequence, the Hochschild-Serre spectral sequence (see \cite[Section 7.5]{Weibel}), with 
\begin{gather*}
 E_0^{p,q}=\Lambda^p(\lg/\lh)^*\tensor \Lambda^q\lh^*\\
E_2^{p,q}=H^p(\lg/\lh, H^q(\lh))=H^p(\lg/\lh)\tensor H^q(\lh)\implies H^{p+q}(\lg, \IR).
\end{gather*}
The second description of the $E_2$-term holds in our setting since $\lh$ is contained in the centre of $\lg$, which corresponds to $\pi$ being a principal bundle.

Now we deduce a proof of Nomizu's theorem: we know the result for the torus and then proceed by induction on the nilpotency index. The inclusion $(\Lambda^\bullet\lg^*,d)\into (\ka^\bullet(M), d)$ is compatible with the filtrations we introduced and thus we get an induced homomorphism of spectral sequences. At the $E_2$ level this is
\[ H^p(\lg/\lh)\tensor H^q(\lh)\to H^p_{dR}(M')\tensor H^q_{dR}(T)\]
which is an isomorphism by induction hypothesis. Thus also in the limit we have the desired isomorphism
\[ H^*(\lg)\overset{\isom}{\longrightarrow}H^*_{dR}(M).\]

\begin{rem}
The statement we just proved extends to solvmanifolds, i.e., compact quotients of solvable groups, that satisfy the so-called Mostow condition 
\cite{mostow61}.  The de Rham cohomology of more general solvmanifolds can be studied via an auxiliary construction due to Guan \cite{guan07} which was recently reconsidered by Console and Fino \cite{con-fin09}.
\end{rem}

\section{Left-invariant complex structures and Dolbeault cohomology}

We start this section by recalling the definition of Dolbeault cohomology and giving the precise statement of the conjecture. Then we discuss to what extent the proof of Nomizu's result, discussed in the preceding section, carries over to the holomorphic setting. After mentioning the openness result of Console and Fino we will also give some new results and discuss directions of future research.

\subsection{Reminder on Dolbeault cohomology}\label{reminder}
Recall that an (integrable) complex structure on a differentiable manifold $M$ is a vector bundle endomorphism $J$ of the tangent bundle which satisfies $J^2=-\id$ and  the integrability condition \eqref{nijenhuis}. The endomorphism $J$ induces a decomposition of the complexified tangent bundle by letting pointwise $\einsnull TM\subset T_\IC M=T M\tensor\IC$ be the $\I$-eigenspace of $J$. Then the $-\I$-eigenspace is $\nulleins TM=\overline{\einsnull TM}$. Note that $\einsnull TM $ is naturally isomorphic to $(TM,J)$ as a complex vector bundle via the projection, and the integrability condition can be formulated as $[\einsnull TM, \einsnull TM]\subset \einsnull TM$.

The bundle of differential $k$-forms decomposes
\[\Lambda^kT^*_\IC M=\bigoplus_{p+q=k}\Lambda^p \einsnull{T^*}M\tensor \Lambda^q\nulleins{T^*}M=\bigoplus_{p+q=k}\Lambda^{p,q}T^*M,\]
and we denote by $\ka^{p,q}(M)$ the $\kc^\infty$-sections of the bundle $\Lambda^{p,q}T^*M$, i.e., the global differential forms of type $(p,q)$.

The integrability condition \eqref{nijenhuis} is equivalent to the decomposition of the differential $d=\del+\delbar$  and for all $p$ we get  the Dolbeault complex
\[(\ka^{p,\bullet}(M_J), \delbar): 0\to \ka^{p,0}(M)\overset\delbar\longrightarrow \ka^{p,1}(M)\overset\delbar\longrightarrow \dots\]
The Dolbeault cohomology groups $H^{p,q}(M)=H^q(\ka^{p,\bullet}(M), \delbar)$ are one of the most fundamental holomorphic invariants of $M_J$; from another point of view, the  Dolbeault complex computes the  cohomology groups of  the sheaf $\Omega^p_{M_J}$ of holomorphic $p$-forms.

In case $M$ is a nilmanifold and $J$ is left-invariant all of the above can be considered at the level of left-invariant forms. Decomposing $\lg^*_\IC=\einsnull{\lg^*}\oplus \nulleins{\lg^*}$ and setting $\Lambda^{p,q}\lg^*=\Lambda^p\einsnull{\lg^*}\tensor \Lambda^q\nulleins{\lg^*}$ we get subcomplexes
\begin{equation}\label{inclusion}
 (\Lambda^{p,\bullet}\lg^*, \delbar)\into (\ka^{p,\bullet}(M_J), \delbar). 
\end{equation}
In fact, the left hand side has a purely algebraic interpretation worked out in \cite{rollenske09a}: $\nulleins\lg$ is a Lie-subalgebra of $\lg_\IC$ and the adjoint action followed by the projection to the $(1,0)$-part makes $\einsnull\lg$ into an $\nulleins\lg$-module. Then the complex $(\Lambda^{p,\bullet}\lg^*, \delbar)$ computes the Lie-algebra cohomology of $\nulleins\lg$ with values in $\Lambda^p\einsnull{\lg^*}$ and we call
\[H^{p,q}(\lg, J)=H^q(\nulleins\lg, \Lambda^p\einsnull{\lg^*})=H^q(\Lambda^{p,\bullet}\lg^*, \delbar)\]
the Lie-algebra Dolbeault cohomology of $(\lg, J)$. 

We can now formulate the analogue of Nomizu's theorem for Dolbeault cohomology as a conjecture.
\begin{custom}[Conjecture]\label{conj}
 Let $M_J$ be a nilmanifold with left-invariant complex structure. Then the map
\begin{equation*}
\phi_J: H^{p,q}(\lg, J)\to H^{p,q}(M_J)
\end{equation*}
induced by \eqref{inclusion} is an isomorphism.
\end{custom}
It is known that $\phi_J$ is always injective (see \cite{con-fin01} or \cite{rollenske09a}). 

We will accumulate evidence for the conjecture over the next sections and also explain which are the open cases.

\subsection{The inductive proof}

In order to extend the idea of Nomizu's proof to Dolbeault cohomology we need to have three ingredients:
\begin{enumerate}
 \item Can we start the induction, i.e., can we express the Dolbeault cohomology of a complex torus as a suitable Lie-algebra cohomology?
\item Does the complex geometry of nilmanifolds allow us to proceed by induction? For example, is every nilmanifold with left-invariant complex structure a holomorphic principal bundle?
\item Are there spectral sequences that play the role of the Leray-Serre and Hochschild-Serre spectral sequence for (Lie-algebra) Dolbeault cohomology?
\end{enumerate}
It is well known that the first question has a positive answer (see e.g. \cite[p.15]{Birkenhake-Lange}). In our language, assume that $\lg$ is abelian and $J$ is a complex structure. Then  the differential in the Lie-algebra Dolbeault complex $(\Lambda^{p, \bullet}\lg^*, \delbar)$   is trivial (being induced by the adjoint action) and thus 
\[H^{p,q}(\lg, J)=\Lambda^{p,q}\lg^*=\Lambda^p\lg^*\tensor \Lambda^q\bar\lg^*=H^{p,q}(M_J).\]

Unfortunately, the answer to the second question is negative. We will discuss the geometry of nilmanifolds with left-invariant complex structure in Section \ref{geometry} and see that nevertheless the inductive approach works in many important special cases.

The positive answer to the third question, important for the induction step, has been worked out by Cordero, Fern\'andez, Gray and Ugarte \cite{cfgu00} for principal holomorphic torus bundles and in greater generality by Console and Fino \cite{con-fin01}. The extra grading coming from the $(p,q)$-type of the differential forms makes the notation and the construction of the necessary spectral sequences more involved. For the usual Dolbeault cohomology of a holomorphic fibration the result goes back to Borel \cite[Appendix II, Theorem 2.1]{Hirzebruch}.

\begin{prop}[Console, Fino]\label{indstep}
 Let $M_J$ be a nilmanifold with left-invariant complex structure and let $\pi:M\to M'$ be a holomorphic fibration with typical fibre $F$ induced by a  $\Gamma$-rational and $J$-invariant ideal $\lh\subset \lg$ (as explained in Section \ref{geometry}). If for all $p,q$ we have  
\[H^{p,q}(\lh, J\restr\lh)\isom H^{p,q}(F)\text{ and }H^{p,q}(\lg/\lh, J')\isom H^{p,q}(M'),\]
 where $J'$ is the complex structure on $\lg/\lh$ induced by $J$, then also \[H^{p,q}(\lg, J)\isom H^{p,q}(M).\]
\end{prop}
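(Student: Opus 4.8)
The plan is to mimic the proof of Nomizu's theorem sketched in Section 2, but with the two spectral sequences replaced by their Dolbeault analogues. The starting point is the holomorphic fibration $\pi\colon M\to M'$ with typical fibre $F$, which on the Lie-algebra side corresponds to the short exact sequence $0\to\lh\to\lg\to\lg/\lh\to 0$ of complex structures. Since $\lh$ is a $J$-invariant ideal, this sequence is compatible with the type decomposition: we get $0\to\nulleins\lh\to\nulleins\lg\to\nulleins{(\lg/\lh)}\to 0$ as an exact sequence of Lie algebras, and $\einsnull\lh$, $\einsnull{(\lg/\lh)}$ sit inside $\einsnull\lg$ compatibly. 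The dual filtration on $\Lambda^{p,\bullet}\lg^*$ by ``horizontal degree'' then produces a Hochschild--Serre-type spectral sequence for the Lie-algebra Dolbeault complex, whose $E_1$ (or $E_2$) term is built out of $H^{\bullet,\bullet}(\lh,J|_\lh)$ and $H^{\bullet,\bullet}(\lg/\lh,J')$; this is exactly the algebraic spectral sequence of Console--Fino \cite{con-fin01} that I would quote rather than rederive. On the geometric side, the analogous filtration of $\ka^{p,\bullet}(M)$ by the order of vanishing along the fibres of $\pi$ gives the Borel spectral sequence \cite[Appendix II, Theorem 2.1]{Hirzebruch} for the holomorphic fibration $\pi$, with $E_2$-term assembled from $H^{\bullet,\bullet}(F)$ and $H^{\bullet,\bullet}(M')$.

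First I would make precise that the inclusion \eqref{inclusion} is filtration-preserving: a left-invariant $(p,q)$-form that is ``horizontal of degree $\geq r$'' in the algebraic filtration maps to a form vanishing to order $\geq r$ along the fibres, because the subbundle $\nulleins{(T F)}\subset\nulleins{TM}$ is exactly the one integrating $\nulleins\lh$. Hence \eqref{inclusion} induces a morphism of the two spectral sequences above, compatible with all differentials. Next I would identify the induced map on $E_2$-terms (or $E_1$, depending on the bookkeeping in \cite{con-fin01}): it is, up to the usual tensor-factorization that holds because $\lh$ is an ideal and $\pi$ a bundle with connected structure group (so the relevant coefficient systems are trivial), the tensor product of the comparison maps
\[
\phi_{J|_\lh}\colon H^{a,b}(\lh,J|_\lh)\to H^{a,b}(F),\qquad
\phi_{J'}\colon H^{c,d}(\lg/\lh,J')\to H^{c,d}(M').
\]
By hypothesis both of these are isomorphisms, so the morphism of spectral sequences is an isomorphism on the $E_2$-page.

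The conclusion is then formal: a morphism of (first-quadrant, hence convergent) spectral sequences that is an isomorphism on $E_2$ is an isomorphism on $E_\infty$, and since the filtrations on $H^{p,q}(\lg,J)$ and on $H^{p,q}(M)$ are finite, a five-lemma / induction-on-the-length-of-the-filtration argument upgrades this to the statement that $\phi_J\colon H^{p,q}(\lg,J)\to H^{p,q}(M)$ is an isomorphism. (Strictly one only needs surjectivity here, since injectivity of $\phi_J$ is already known; but the spectral-sequence comparison delivers both at once.)

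The main obstacle, and the place where real care is needed rather than formal nonsense, is the bookkeeping around the coefficient systems in the two $E_2$-terms and checking that the comparison map genuinely factors as the tensor product written above. In the de Rham case (Nomizu) the monodromy is trivial because the fibre is a torus and the structure group connected; in the Dolbeault case one must verify that the relevant $\nulleins\lh$-action on $\Lambda^\bullet\einsnull\lh{}^*$ and the monodromy of $H^{\bullet,\bullet}(F)$ over $M'$ are matched up correctly, and that the additional $(p,q)$-grading does not obstruct the factorization --- this is precisely the technical heart of \cite{cfgu00, con-fin01}, and in a survey I would invoke it rather than reprove it, while being explicit that this is what makes the argument work. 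A secondary point to be careful about is that the fibre $F$ is required to be (a quotient of) $\lh$ realizing $H^{p,q}(\lh,J|_\lh)\isom H^{p,q}(F)$; the hypothesis supplies this, but one should note that $\lh$ being $\Gamma$-rational is what guarantees $F$ is again a nilmanifold with left-invariant complex structure, so that the statement $H^{p,q}(\lh,J|_\lh)\isom H^{p,q}(F)$ even makes sense.
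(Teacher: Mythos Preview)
Your proposal is correct and follows precisely the approach the paper indicates. Note that the paper does not actually give its own proof of this proposition: it is stated as a result of Console and Fino with a citation to \cite{con-fin01}, preceded only by the remark that the argument uses the Dolbeault analogues of the Leray--Serre and Hochschild--Serre spectral sequences (with Borel's spectral sequence \cite[Appendix II, Theorem 2.1]{Hirzebruch} on the geometric side) and that the extra $(p,q)$-grading makes the bookkeeping more involved --- exactly the points you identify and handle.
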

Clearly, with the above proposition we can proceed inductively to compute the Dolbeault cohomology of iterated holomorphic principal bundles as we did in the real case.  Unfortunately, considering principal holomorphic torus bundles is not enough so we really need to decide when a nilmanifolds with left-invariant complex structure admits a suitable fibration.

\subsubsection{When is a nilmanifold with left-invariant complex structure an iterated (principal) bundle?}\label{geometry}
We have seen that we need to understand the geometry of nilmanifolds with left-invariant complex structure, in particular whether  there are natural fibrations over nilmanifolds of smaller dimension. In general, the projections in  the tower of (real) principal bundles \eqref{tower} will not be holomorphic, for example, the centre could be odd-dimensional.

It would be convenient if we could detect fibrations of $M$ by studying only the Lie-algebra $\lg$. For universal cover, i.e., the simply connected Lie group, this is easy: a fibration $G\to G'$ over another simply connected nilpotent Lie-group corresponds to a short exacts sequence of Lie algebras
\[ 0\to \lh\to \lg\to \lg'\to 0\]
or, in other words, to an ideal $\lh\subset \lg$. Here we use that, by the Baker-Campell-Hausdorff formula (see e.g. \cite[Section B.4]{Knapp}), the exponential map $\exp:\lg \to G$ is a diffeomorphism and hence every ideal induces a closed subgroup of $G$.

If we  look at a  2-dimensional torus $M=\IR^2/\IZ^2$ then every  1-dimensional subspace $\lh$ in the abelian Lie-algebra $\lg=\IR^2$ is an ideal. But there is some extra structure: a basis for the lattice (or, strictly speaking,  the logarithm of this basis) generates a $\IQ$-vector space $\lg_\IQ\isom\IQ^2\subset \lg$ such that $\lg_\IQ\tensor\IR=\lg$. Clearly, a 1-dimensional subgroup corresponding to $\lh\subset \lg$ closes to a circle in the quotient if an only if it has rational slope, i.e., if and only if  $\lh\cap \lg_\IQ$ is a $\IQ$-vector space of dimension 1.

The general case is captured in the following definition.
\begin{defin}
Let $\lg$ be a nilpotent Lie-algebra. A \emph{rational structure} for $\lg$ is a subalgebra $\lg_\IQ$ defined over $\IQ$ such that $\lg_\IQ\tensor \IR =\lg$.

A subalgebra $\lh\subset \lg$ is said to be rational with respect to a given rational structure $\lg_\IQ$ if $\lh_\IQ:=\lh\cap \lg_\IQ$ is a rational structure for $\lh$.

If $\Gamma$ is a lattice in the corresponding simply connected Lie-group $G$ then its associated rational structure is given by the $\IQ$-span of $\log \Gamma$. A rational subspace with respect to this structure is called \emph{$\Gamma$-rational}.
\end{defin}
\begin{rem}\label{Qstr}
One has to check that this is well defined, i.e., that the $\IQ$-span of $\log\Gamma$   gives a rational structure. Indeed more is true: a nilpotent Lie-algebra admits a $\IQ$-structure if and only if the corresponding simply connected Lie-group contains a lattice \cite[Theorem 5.1.8]{Cor-Green}.

This criterion makes it particularly simple to produce examples: given a nilpotent Lie-algebra $\lg$ with rational structure constants we know that there exists a lattice $\Gamma$ in the corresponding Lie-group $G$ and we get a compact nilmanifold $M=\Gamma\backslash G$. Since most properties of $M$ are encoded in $\lg$ there is usually no need to specify the lattice concretely. 
\end{rem}

Coming back to the original problem we have \cite[Lemma 5.1.4, Theorem 5.1.11]{Cor-Green}:
\begin{lem}
Let $\lh\subset \lg$ be an ideal. Then the fibration $G\to G/\exp\lh$ descends to a fibration of compact nilmanifolds $\pi:M\to M'$ if and only if $\lh$ is $\Gamma$-rational.
\end{lem}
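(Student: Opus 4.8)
The plan is to reduce the statement to two facts about lattices in simply connected nilpotent Lie groups: first, that the fibration $M\to M'$ exists as a fibration of compact nilmanifolds precisely when $p(\Gamma)$ is a lattice in $G/\exp\lh$, and second, that this happens exactly when $\lh$ is $\Gamma$-rational. By the Baker--Campbell--Hausdorff formula $\exp:\lg\to G$ is a diffeomorphism, and since $\lh$ is an ideal, $H:=\exp\lh$ is a closed connected normal subgroup, $G':=G/H$ is again simply connected nilpotent with Lie algebra $\lg/\lh$, and we write $p:G\to G'$ for the quotient homomorphism. Then $\pi:M\to M':=p(\Gamma)\backslash G'$ is defined and is a fibre bundle as soon as $p(\Gamma)$ is discrete in $G'$, with fibre $(\Gamma\cap H)\backslash H$ over the base point. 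Cocompactness of $p(\Gamma)$ is automatic, being the continuous image of the compact space $M$, so ``$\pi$ descends to a fibration of compact nilmanifolds'' is equivalent to ``$p(\Gamma)$ is a lattice in $G'$'', which by standard Malcev theory (the subgroup $\Gamma H$ is closed in $G$ exactly when $\Gamma\cap H$ is cocompact in $H$, and then $p(\Gamma)$ is automatically discrete) is in turn equivalent to ``$\Gamma\cap H$ is a lattice in $H$''.

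It then remains to prove the equivalence $(\ast)$: $\Gamma\cap\exp\lh$ is a lattice in $\exp\lh$ if and only if $\lh_\IQ:=\lh\cap\lg_\IQ$ spans $\lh$ over $\IR$, where $\lg_\IQ$ is the $\IQ$-span of $\log\Gamma$. For the forward direction, Malcev's theorem furnishes a Malcev basis of the lattice $\Gamma\cap H$ whose logarithms form an $\IR$-basis of $\lh$; since these logarithms lie in $\log\Gamma\subset\lg_\IQ$ by definition of $\lg_\IQ$, they lie in $\lh\cap\lg_\IQ$, which therefore spans $\lh$. The converse is the substantial direction: assuming $\lh_\IQ$ is a rational form of $\lh$, one chooses a Malcev basis $X_1,\dots,X_n$ of $\lg$ adapted simultaneously to the descending central series, to the rational structure $\lg_\IQ$, and to $\lh$, so that $X_1,\dots,X_k$ span $\lh$ and $X_{k+1},\dots,X_n$ project to such a basis of $\lg/\lh$. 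Relative to such a basis $\Gamma$ is commensurable with the integer subgroup $\exp(\IZ X_1)\cdots\exp(\IZ X_n)$, and one checks that the analogous integer subgroup of $H$ is a lattice in $H$ which, up to finite index, is contained in $\Gamma\cap H$; this is exactly \cite[Theorem 5.1.11]{Cor-Green}, and I would cite it rather than reproduce the propagation of rationality through Malcev coordinates.

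The \emph{main obstacle} is precisely this converse half of $(\ast)$: rationality of $\lh$ only gives a rational basis, and to turn it into genuine elements of the lattice $\Gamma$ lying inside $H$ one must control denominators simultaneously in all coordinates of a Malcev basis. This is where the inductive, filtered structure of nilpotent groups does the work — concretely, one uses that $\Gamma\cap\kz^iG$ is a lattice in $\kz^iG$ (cf.\ \cite[p.~208]{Cor-Green}) to peel off one central quotient at a time. Everything else in the argument — the diffeomorphism property of $\exp$, the normality and closedness of $H$, the automatic cocompactness of $p(\Gamma)$, and the passage between discreteness of $p(\Gamma)$ and cocompactness of $\Gamma\cap H$ — is formal.
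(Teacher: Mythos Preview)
The paper does not give its own proof of this lemma at all: it simply states the result with the attribution \cite[Lemma 5.1.4, Theorem 5.1.11]{Cor-Green} and moves on. Your proposal is a correct unpacking of precisely those references --- you reduce to the question of when $p(\Gamma)$ is a lattice, pass to the equivalent condition that $\Gamma\cap H$ is cocompact in $H$, and then invoke \cite[Theorem 5.1.11]{Cor-Green} for the nontrivial direction of the equivalence with $\Gamma$-rationality --- so your argument is both correct and faithful to (indeed, more detailed than) what the paper offers.
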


In principle, all subspaces that are naturally associated to the Lie-algebra structure of $\lg$ are rational with respect to any rational structure in $\lg$. In particular this holds for the subspaces in the ascending  and descending central series (Definition \ref{ZgCg}) and intersections thereof \cite[p. 208]{Cor-Green}.

If we add left-invariant complex structures,  we would like the fibration $\pi:M_J\to M'_{J'}$ to be holomorphic as well, which, by left-invariance, is the same as to say that $\lg\to \lg'$ is complex linear or equivalently that $\lh$ is a complex subspace of $(\lg, J)$. We have proved
\begin{prop}\label{fibration}
 Let $M_J$ be a nilmanifold with left-invariant complex structure. Then $\lh\subset \lg$ defines a holomorphic fibration $\pi:M_J\to M'_{J'}$ if and only if $\lh$ is a $J$-invariant and $\Gamma$-rational ideal in $\lg$.
\end{prop}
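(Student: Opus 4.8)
The plan is to isolate the purely smooth content, which is exactly the Lemma recalled just above, and then to observe that, on top of it, ``holomorphic'' translates into the complex-linearity of a \emph{single} linear map by left-invariance. First I would record the smooth skeleton: for $\lh\subset\lg$ to define a fibration at all, $\exp\lh$ must be a closed (normal) subgroup of $G$, which, since $\exp$ is a diffeomorphism, happens precisely when $\lh$ is an ideal; then $\lg':=\lg/\lh$ is again a nilpotent Lie algebra, with simply connected group $G':=G/\exp\lh$, and by the Lemma the fibration $G\to G'$ descends to a fibration $\pi:M\to M'$ of compact nilmanifolds if and only if $\lh$ is $\Gamma$-rational. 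Finally, the differential $d\pi$ at the base point is the projection $q:\lg\to\lg'$, and by left-invariance of $\pi$ the differential at any other point is $q$ conjugated by the relevant left translations; in particular $\pi$ is holomorphic for two left-invariant complex structures if and only if $q$ is complex linear.

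For the implication ``$\Leftarrow$'' assume in addition that $\lh$ is $J$-invariant. Then there is a unique endomorphism $J'$ of $\lg'$ with $J'\circ q=q\circ J$, and $(J')^2=-\id$. I claim $J'$ is integrable. Complexifying, and using that $q$ is now complex linear, $q_\IC$ carries the $(1,0)$- and $(0,1)$-parts for $J$ onto those for $J'$; in particular $\nulleins{\lg'}=q_\IC(\nulleins\lg)$. Since $q$ (hence $q_\IC$) is a Lie-algebra homomorphism, because $\lh$ is an ideal, and $J$ is integrable, we get
\[[\nulleins{\lg'},\nulleins{\lg'}]=q_\IC\bigl([\nulleins\lg,\nulleins\lg]\bigr)\subset q_\IC(\nulleins\lg)=\nulleins{\lg'}.\]
As $\lg'$ is nilpotent, $J'$ defines a left-invariant complex structure on $M'$, so $\pi$ becomes a map $M_J\to M'_{J'}$, and it is holomorphic because $q$ is complex linear by the first paragraph.

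For ``$\Rightarrow$'', suppose $\pi:M_J\to M'_{J'}$ is a holomorphic fibration defined by $\lh$. By the smooth part $\lh$ is already a $\Gamma$-rational ideal, so it only remains to see it is $J$-invariant. Holomorphicity at the base point says $q\circ J=J'\circ q$; hence for $v\in\lh=\ker q$ we get $q(Jv)=J'q(v)=0$, i.e.\ $Jv\in\lh$. Thus $\lh$ is $J$-invariant, which completes the equivalence.

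The only step that I expect to require genuine care is the descent of integrability from $J$ to $J'$; once one uses the reformulation $[\nulleins\lg,\nulleins\lg]\subset\nulleins\lg$ of \eqref{nijenhuis} this is a two-line verification, and everything else is bookkeeping that merges the quoted Lemma with the elementary observation above. No new analytic input is needed beyond Newlander--Nirenberg and the lattice-theoretic descent of fibrations already cited.
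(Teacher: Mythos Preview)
Your proof is correct and follows essentially the same route as the paper: the paper's entire argument is the sentence preceding the proposition, namely that the cited Lemma handles the smooth/rational part and that, by left-invariance, holomorphicity of $\pi$ is equivalent to complex linearity of $\lg\to\lg'$, i.e.\ to $J$-invariance of $\lh$. You have simply unpacked this more carefully, in particular supplying the (implicit) verification that the induced $J'$ on $\lg/\lh$ is integrable.
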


It is time for an example that shows what can go wrong:
\begin{exam}\label{badex}
We define a 6-dimensional Lie algebra $\lh_7$ with basis $e_1, \dots, e_6$ where, up to anti-commutativity, the only non-zero brackets are
\[[e_1, e_2]=-e_4,\, [e_1, e_3]=-e_5,\, [e_2, e_3]=-e_6.\] 
   The vectors $e_4\dots,e_6$ span the centre $\kz^1\lh_7=\kc^1\lh_7$. 

Since the structure equations are rational there is a lattice $\Gamma$ in the corresponding simply connected Lie-group $H_7$ and we can consider the nilmanifold $M=\Gamma\backslash H_7$.

For $\lambda\in \IR$  we give a left-invariant complex structure $J_\lambda$ on $M$ by specifying a basis for the space of $(1,0)$-vectors:
\[(\einsnull{\lh_7})_\lambda:=\langle X_1=e_1-ie_2, X_2^\lambda= e_3-i (e_4-\lambda e_1), X_3^\lambda=-e_5+\lambda e_4+ie_6\rangle\]
One can check that $[X_1, X_2^\lambda]=X_3^\lambda$ and, since $X_3^\lambda$ is contained in the centre, the complex structure is integrable. The largest complex subspace of the centre is spanned by the real and imaginary part of $X_3^\lambda $ since the centre has real dimension three.

The simply connected Lie-group $H_7$ has  a filtration by subgroups induced by the filtration 
\[\lh_7\supset V_1=\langle \lambda e_2+e_3, e_4,Im(X_3^\lambda),Re(X_3^\lambda)\rangle\supset V_2=\langle Im(X_3^\lambda),Re(X_3^\lambda)\rangle \supset 0\]
on the Lie-algebra and, since all these are $J$ invariant, $H_7$ has the structure of a tower of principal holomorphic bundles with fibre $\IC$.
In fact, using the results of \cite{ugarte07}, a simple calculation shows that every complex structure on $\lh_7$ is equivalent to $J_0$.

Now we take the compatibility with  the lattice into account. 
The rational structure induced by $\Gamma$ coincides with the $\IQ$-algebra generated by the basis vectors $e_k$ and, by the criterion in Proposition \ref{fibration}, the fibrations on $H_7$ descends to the compact nilmanifold $M$ if and only if  $\lambda$ is rational.
In fact, one can check that for $\lambda\notin \IQ$ the Lie-algebra $\lh_7$ does not contain any non-trivial $J$-invariant and $\Gamma$-rational ideals, so there is no holomorphic fibration at all over a nilmanifold of smaller dimension.
\end{exam}

To understand when there is a suitable tower of fibrations on a nilmanifold the following definitions turn out to be useful:
\begin{defin}\label{stableseries}
Let $\lg$ be a nilpotent Lie-algebra with rational structure $\lg_\IQ$. We call an ascending filtration 
\[0=\ks^0\lg\subset \ks^1\lg\subset \dots \subset \ks^t\lg=\lg\]
a \emph{(complex) torus bundle series} with respect to  a complex structure $J$ if for all $i=1\dots , t$
\begin{gather*}
\ks^i\lg \text{ is rational with respect to $\lg_\IQ$ and an ideal in }\ks^{i+1}\lg, \tag{$a$}\\
J\ks^i\lg=\ks^i\lg,\tag{$b$}\\
\ks^{i+1}\lg/\ks^{i}\lg \text{ is abelian  }.\tag{$c$}
\intertext{If in addition}
\ks^{i+1}\lg/\ks^{i}\lg\subset\kz(\lg/\ks^{i}\lg),\tag{$c'$}\label{princ}
\end{gather*}
then $(\ks^i\lg)_{i=0,\dots, t}$ is called a \emph{principal torus bundle series}.

An ascending filtration $(\ks^i\lg)_{i=0,\dots, t}$ on $\lg$ is said to be a  \emph{stable torus bundle series}  for $\lg$, if $(\ks^i\lg)_{i=0,\dots, t}$  is a  torus bundle series for every complex structure $J$ and every rational structure $\lg_\IQ$ in $\lg$. If also condition \eqref{princ} holds then it is called a  \emph{stable principal torus bundle series}.
\end{defin}
Geometrically, a principal torus bundle series induces the holomorphic analogue of the tower of real principal torus bundles described in \eqref{tower}. 

With a  torus bundle series we get in some sense the opposite picture: we start by fibring $M$ over a complex torus with fibre a nilmanifold with left-invariant complex structure and then proceed by decomposing the fibre further. More precisely, the complex structure $J$ restricts to each of the sub-algebras $\ks^i\lg$, and since they are rational we get a nilmanifold with left-invariant complex structure $M_i=\ks^i\Gamma\backslash \ks^iG$ where $\ks^iG=\exp \ks^i\lg$ and $\ks^i\Gamma=\Gamma\cap \ks_iG$. Let $T_i$ be the complex torus associated to $\ks^{i}\lg/\ks^{i-1}\lg$ with the induced complex structure and lattice. The short exact sequences
\[0\to \ks^{i-1}\lg\to\ks^{i}\lg\to \ks^{i}\lg/\ks^{i-1}\lg\to 0\]
give rise to holomorphic fibre bundles
\begin{equation}\label{wall}
{\xymatrix{ M_{i-1} \ar@{^(->}[r] & M_i \ar[d]^{\pi_i}\\ & T_i}} \qquad \text{for } i=1, \dots, t
\end{equation}
with  $M_t=M$ and $M_1=T_1$. Note that these bundles cannot be principal bundles in general since the fibre is not a complex Lie group.

Thus a torus bundle series gives an inductive decomposition of  $M_J$ into complex tori. Considering the complex structure $J_0$ in Example \ref{badex} we see that the length of a (principal) torus bundle series may be larger than the nilpotency index.

The notions of stable (principal) torus bundle series appear to be quite strong but in \cite{rollenske09b} many examples of such have been produced. For example, the classification of complex structures on Lie-algebras with  $\dim \kc^1\lg=1$, worked out independently by several authors,  shows that $0\subset \kz\lg \subset \lg$ is a stable principal torus bundle series \cite[Propostion 3.6]{rollenske09b}.  The notion has the advantage to be independent of the chosen lattice and complex structure and allows to give structural information valid for all nilmanifolds with left-invariant complex structure and Lie-algebra $\lg$.

If we have a holomorphic decomposition as \eqref{tower} on page \pageref{tower} or \eqref {wall} then, by Proposition \ref{indstep}, the inductive approach works and we obtain
\begin{theo}[Console, Fino]
If $M_J$ is a nilmanifold with left-invariant complex structure such that $\lg$ admits a (principal) torus bundle series with respect to $J$ then Conjecture \ref{conj} holds for $M_J$.
\end{theo}
\begin{cor}
 If $\lg$ admits a stable (principal) torus bundle series then Conjecture \ref{conj} holds for every nilmanifold with left-invariant structure with Lie-algebra $\lg$. 
\end{cor}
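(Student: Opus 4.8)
The plan is to deduce the corollary directly from the theorem of Console and Fino just stated, the point being that the adjective ``stable'' was coined precisely so that the necessary interchange of quantifiers becomes automatic. First I would fix, once and for all, an arbitrary nilmanifold $M=\Gamma\backslash G$ whose associated Lie algebra is $\lg$, together with an arbitrary left-invariant (hence integrable) complex structure $J$ on $M$, i.e.\ an integrable $J:\lg\to\lg$ with $J^2=-\id$. The goal is then to verify Conjecture~\ref{conj} for this single $M_J$. The lattice $\Gamma$ equips $\lg$ with its associated rational structure $\lg_\IQ$, namely the $\IQ$-span of $\log\Gamma$; that this really is a rational structure of the type appearing in Definition~\ref{stableseries} is exactly the content of Remark~\ref{Qstr}, i.e.\ \cite[Theorem 5.1.8]{Cor-Green}.

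Next I would simply unwind the hypothesis. By assumption $\lg$ admits a filtration $0=\ks^0\lg\subset\ks^1\lg\subset\dots\subset\ks^t\lg=\lg$ which is a (principal) torus bundle series with respect to \emph{every} complex structure and \emph{every} rational structure on $\lg$; in particular it is a (principal) torus bundle series with respect to the chosen $J$ and the chosen $\lg_\IQ$. The preceding theorem now applies verbatim and yields $H^{p,q}(\lg,J)\isom H^{p,q}(M_J)$ for all $p,q$, which is precisely the assertion of Conjecture~\ref{conj} for $M_J$. Since $M$ and $J$ were arbitrary, the conjecture holds for every nilmanifold with left-invariant complex structure and Lie algebra $\lg$, as claimed; the argument is identical in the ``principal'' and the ``non-principal'' case, since the corresponding version of the Console--Fino theorem is used in each.

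I do not expect any genuine obstacle here: all the real work sits inside the Console--Fino theorem, whose proof in turn consists of iterating Proposition~\ref{indstep} up the holomorphic tower \eqref{wall} (or \eqref{tower}), starting from the abelian base case, where the Lie-algebra Dolbeault differential vanishes and $H^{p,q}(\lg,J)=\Lambda^{p,q}\lg^*=H^{p,q}(M_J)$. If anything needs care it is only the bookkeeping observation above --- that the rational structure coming from a lattice $\Gamma$ is of the kind quantified over in the definition of stability, so that the stable series is genuinely a $\Gamma$-rational one for this $M$ --- but this has already been recorded in Remark~\ref{Qstr}.
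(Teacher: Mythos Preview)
Your proposal is correct and matches the paper's intent: the corollary is stated without proof because it follows immediately from the preceding theorem together with the definition of ``stable'' in Definition~\ref{stableseries}, which is exactly the unwinding you perform. The only additional observation you make explicit --- that the rational structure induced by an arbitrary lattice is of the type quantified over in the definition of stability --- is indeed the one point worth noting, and you handle it correctly via Remark~\ref{Qstr}.
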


All possible types of nilmanifolds with left-invariant complex structure up to real dimension 4 were mentioned in the introduction -- there are only complex tori and Kodaira surfaces for which the conjecture is well known.
In real dimension 6 there are only 34  isomorphism classes of nilpotent Lie-algebras and the 18 classes admitting a complex structure have been classified by Salamon \cite{salamon01}. We already met the Lie-algebra $\lh_7$ in Example \ref{badex}. The first part of the following result, which implies the second, is contained in  \cite[Section 4.2]{rollenske09b}.
\begin{cor}
If $M_J$ is a nilmanifold of dimension at most six with Lie-algebra $\lg\ncong\lh_7$ then $\lg$ admits a stable (principal) torus bundle series and Conjecture \ref{conj} holds for $M_J$.
\end{cor}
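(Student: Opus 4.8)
\emph{Proof strategy.} In real dimensions $2$ and $4$ the statement only concerns complex tori and Kodaira surfaces, for which Conjecture~\ref{conj} is classical and an explicit stable principal torus bundle series (namely $0\subset\kz\lg\subset\lg$, or $0\subset\lg$ in the abelian case) is immediate; so assume $\dim_\IR\lg=6$. Recall that a stable (principal) torus bundle series forces Conjecture~\ref{conj} for every $M_J$ with the given Lie algebra (the preceding Corollary, obtained by iterating Proposition~\ref{indstep}), so it suffices to prove the first assertion: every $6$-dimensional nilpotent Lie algebra $\lg\ncong\lh_7$ carrying a complex structure admits a stable (principal) torus bundle series. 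By Salamon's classification \cite{salamon01} there are exactly $18$ isomorphism classes of such algebras, so what remains is a finite, but genuinely case-by-case, check over the other $17$.

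The plan is to organise the check by the dimension of the commutator $\kc^1\lg=[\lg,\lg]$. If $\lg$ is abelian then $0\subset\lg$ is trivially a stable principal torus bundle series. If $\dim\kc^1\lg=1$ then $\lg$ is $2$-step nilpotent with $\kc^1\lg\subset\kz\lg$, the centre is even-dimensional, and $0\subset\kz\lg\subset\lg$ is a stable principal torus bundle series; this is \cite[Proposition~3.6]{rollenske09b} and already rests on the classification of complex structures with one-dimensional commutator. For the remaining algebras, those with $\dim\kc^1\lg\ge 2$ (a class from which we are excluding $\lh_7$), one builds the candidate filtration out of the terms of the ascending and descending central series (Definition~\ref{ZgCg}) and their mutual intersections, possibly refined further, in the same spirit as the flag $\lh_7\supset V_1\supset V_2\supset 0$ of Example~\ref{badex} refines the central series of $\lh_7$ — but with the decisive difference that for the $17$ algebras at hand the refinement can be chosen rational for every lattice, whereas for $\lh_7$ with irrational $\lambda$ it cannot. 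Subspaces built from the central series and intersections are canonically attached to the Lie-algebra structure, hence rational with respect to \emph{any} rational structure \cite[p.~208]{Cor-Green}, so condition~$(a)$ of Definition~\ref{stableseries} holds automatically; conditions~$(c)$ and~\eqref{princ}, being assertions about abelianness and centrality of successive quotients, are then read directly off Salamon's structure constants.

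The real work, and the step I expect to be the main obstacle, is condition~$(b)$: that the chosen filtration is preserved by \emph{every} left-invariant complex structure $J$, not merely by one. Even-dimensionality of a characteristic ideal does not by itself imply $J$-invariance — the three-dimensional centre of $\lh_7$ is precisely what obstructs a stable series there, and for irrational $\lambda$ its two-dimensional largest complex subspace fails to be $\Gamma$-rational, as Example~\ref{badex} shows. To dispose of the remaining $17$ algebras one therefore invokes the explicit description of the moduli of complex structures on $6$-dimensional nilpotent Lie algebras, due to Salamon and to Ugarte \cite{salamon01,ugarte07}, and checks family by family that the relevant characteristic ideal $\lh$ (the centre, a term of the descending series, or a suitable intersection) satisfies $J\lh=\lh$ for every $J$ — equivalently, that $\lh\cap J\lh$ has the expected dimension throughout each family. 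Once this is verified the remaining conditions are purely combinatorial, so $\lg$ carries a stable (principal) torus bundle series, and the preceding Corollary then delivers Conjecture~\ref{conj} for all the corresponding nilmanifolds $M_J$, which is the second assertion. The content of the result is thus that $\lh_7$ is the unique algebra in dimension six for which this scheme breaks down, in the manner made explicit by Example~\ref{badex}.
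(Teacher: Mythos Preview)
Your strategy is correct and matches the paper's own treatment: the paper simply cites \cite[Section~4.2]{rollenske09b} for the case-by-case verification over Salamon's list that a stable (principal) torus bundle series exists, and then invokes the preceding Corollary to deduce Conjecture~\ref{conj}. Your outline of how that verification is organised --- by $\dim\kc^1\lg$, using terms of the central series as the candidate filtration, and establishing $J$-invariance via the Salamon--Ugarte description of complex structures --- is an accurate summary of what the cited reference does.
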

Roughly half of the Hodge numbers of a nilmanifold $(\Gamma\backslash H_7, J)$ can be checked by hand to coincide with the predictions but the ones in the middle are not immediately accessible.

The conjecture is known to be true in other important special cases. If $M_J$ is the quotient of a complex Lie group, i.e., $(\lg, J)$ is a complex Lie algebra, then the tangent bundle of $M_J$ is holomorphically trivial and $M_J$ is complex parallelisable. This can be reformulated as $[Jx, y]=J[x,y]$ for all $x,y\in \lg$ or equivalently as $[\einsnull\lg, \nulleins\lg]=0$.

Complex structures satisfying the opposite condition $[\einsnull{\lg}, \einsnull\lg]=0$ are called abelian (because $\einsnull \lg$ is an abelian subalgebra of $\lg_\IC$). Such complex structures were introduced by Barberis \cite{barberis99} and come up in different contexts \cite{andrada-salamon05, dotti-fino00}.

In both cases it is straightforward to check that the ascending central series is a principal torus bundle series and thus we have
\begin{cor}\label{acp}
 If $M_J$ is a nilmanifold with left-invariant complex structure and $J$ is abelian or if  $M_J$ is complex parallelisable then $M_J$ is an iterated principal holomorphic torus bundle and Conjecture \ref{conj} holds for $M_J$.
\end{cor}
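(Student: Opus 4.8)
The plan is to prove that the ascending central series
\[ 0=\kz^0\lg\subset\kz^1\lg\subset\dots\subset\kz^\nu\lg=\lg \]
is a principal torus bundle series for $J$ in the sense of Definition \ref{stableseries}, and then to read off both assertions from Proposition \ref{fibration} and an iteration of Proposition \ref{indstep}. Conditions $(a)$, $(c)$ and $(c')$ of Definition \ref{stableseries} are automatic here: each $\kz^i\lg$ is a characteristic subalgebra, hence $\Gamma$-rational and an ideal in $\lg$, and $\kz^{i+1}\lg/\kz^i\lg=\kz(\lg/\kz^i\lg)$ by the very definition of the ascending central series. The whole content is therefore condition $(b)$, namely that each $\kz^i\lg$ is $J$-invariant.

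To establish $(b)$ I would first observe that in both cases the bracket satisfies an identity of the form
\[ [Jx,y]=\varepsilon\,[x,Jy]\qquad\text{for all }x,y\in\lg, \]
with $\varepsilon=-1$ when $J$ is abelian --- an elementary reformulation of the defining condition $[\einsnull\lg,\einsnull\lg]=0$, obtained by extending the bracket $\IC$-bilinearly and evaluating on $(1,0)$-vectors --- and with $\varepsilon=+1$ when $M_J$ is complex parallelisable, which is immediate from $[Jx,y]=J[x,y]$ together with the antisymmetry of the bracket. Given such an identity, the $J$-invariance of $\kz^i\lg$ follows without any further induction: if $z\in\kz^i\lg$ then, for every $x\in\lg$, $[Jz,x]=\varepsilon\,[z,Jx]\in\kz^{i-1}\lg$, since $[z,\lg]\subset\kz^{i-1}\lg$ and $Jx\in\lg$; hence $Jz\in\kz^i\lg$.

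With $(b)$ proved, the ascending central series is a principal torus bundle series for $J$, so Proposition \ref{fibration} exhibits $M_J$ as the total space of the associated tower of holomorphic fibrations; because each successive quotient $\kz^{i+1}\lg/\kz^i\lg$ is central in $\lg/\kz^i\lg$ and abelian, the fibrations in this tower are principal and their fibres are the corresponding complex tori, so $M_J$ is an iterated principal holomorphic torus bundle. Applying Proposition \ref{indstep} repeatedly up this tower --- the induction being anchored at a complex torus, for which Conjecture \ref{conj} holds classically --- gives $H^{p,q}(\lg,J)\isom H^{p,q}(M_J)$ for all $p,q$, i.e.\ Conjecture \ref{conj} for $M_J$. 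I do not anticipate a genuine obstacle here: beyond the one-line verification of the identity $[Jx,y]=\varepsilon\,[x,Jy]$ in each of the two cases, the argument is just a bookkeeping exercise feeding the ascending central series into the Console--Fino inductive scheme.
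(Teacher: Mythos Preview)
Your proposal is correct and follows exactly the approach indicated in the paper: the text preceding the corollary says ``In both cases it is straightforward to check that the ascending central series is a principal torus bundle series'', and you have supplied precisely that straightforward check, reducing condition $(b)$ to the identity $[Jx,y]=\varepsilon\,[x,Jy]$ and then invoking the Console--Fino inductive scheme (Proposition \ref{indstep}) together with Proposition \ref{fibration}.
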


It was another insight of Console and Fino that the essential issue here is rationality of ideals: consider the descending central series adapted to $J$ defined by
 \[\kc^i_J(\lg)=\kc^i\lg +J\kc^i\lg,\]
in other words $\kc^i_J\lg$ is the smallest $J$-invariant subspace of $\lg$ containing $\kc^i\lg$. Then, by \cite[Lemma 1]{con-fin01}, these subspaces satisfy condition $(b)$ and $(c)$ of Definition \ref{stableseries}. Thus they induce a decomposition of the universal cover $(G,J)$ as an iterated holomorphic bundle over complex vector spaces similar to \eqref{wall}.

The decomposition of the universal cover descends to the compact manifold $M_J$ if and only if  the subspaces $\kc^i_J\lg$  are rational. In particular this is the case, if $J$ itself is rational, i.e., if $J$  maps $\lg_\IQ$ to itself. Thus we have
\begin{cor}[Console,Fino]\label{rational}
 If $J$ is rational then $\lg$ admits a torus bundle series adapted to $J$ and Conjecture \ref{conj} holds for $M_J$.
\end{cor}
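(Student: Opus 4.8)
The plan is to produce an explicit torus bundle series adapted to $J$ and then invoke the Console--Fino theorem quoted just above. The natural candidate is the $J$-adapted descending central series $\kc^i_J\lg=\kc^i\lg+J\kc^i\lg$: by \cite[Lemma~1]{con-fin01} these subspaces are $J$-invariant, each $\kc^{i+1}_J\lg$ is an ideal in $\kc^i_J\lg$, and the successive quotients $\kc^i_J\lg/\kc^{i+1}_J\lg$ are abelian, so conditions $(b)$ and $(c)$ of Definition~\ref{stableseries} and the ideal part of $(a)$ hold for free. Since the $\kc^i_J\lg$ form a \emph{descending} chain $\lg=\kc^0_J\lg\supseteq\kc^1_J\lg\supseteq\dots\supseteq\kc^\nu_J\lg=0$ (with $\nu=\nu(\lg)$), I would reindex by $\ks^j\lg:=\kc^{\nu-j}_J\lg$, discarding any repetitions, to obtain an ascending filtration $0=\ks^0\lg\subset\ks^1\lg\subset\dots\subset\lg$ of the shape required in Definition~\ref{stableseries}. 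Everything then reduces to checking condition $(a)$ for the $\ks^j\lg$, i.e.\ that each $\kc^i_J\lg$ is $\Gamma$-rational.

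This is where the hypothesis that $J$ is rational enters, and it is essentially the only real content of the proof. Recall that the terms $\kc^i\lg$ of the ordinary descending central series are rational with respect to \emph{every} rational structure, in particular with respect to the structure $\lg_\IQ$ attached to $\Gamma$; thus $\kc^i\lg$ is the $\IR$-span of the $\IQ$-subspace $\kc^i\lg\cap\lg_\IQ$. Because $J(\lg_\IQ)\subseteq\lg_\IQ$, the image $J(\kc^i\lg\cap\lg_\IQ)$ is again a $\IQ$-subspace of $\lg_\IQ$, and it spans $J\kc^i\lg$ over $\IR$ by $\IR$-linearity of $J$; hence $J\kc^i\lg$ is $\Gamma$-rational. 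Finally, a sum of two $\Gamma$-rational subspaces is $\Gamma$-rational — its intersection with $\lg_\IQ$ contains the spanning sum of the two rational structures — so $\kc^i_J\lg=\kc^i\lg+J\kc^i\lg$ is $\Gamma$-rational for all $i$, and $(\ks^j\lg)_j$ is a torus bundle series with respect to $J$.

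The conclusion is then immediate: by the theorem of Console and Fino, a nilmanifold whose Lie algebra admits a torus bundle series adapted to its complex structure satisfies Conjecture~\ref{conj}. Concretely, the series $(\ks^j\lg)_j$ realises $M_J$ as the top of a tower of holomorphic fibre bundles \eqref{wall} over complex tori $T_i$ — for which the conjecture holds by the computation in the abelian case treated above — and one applies Proposition~\ref{indstep} step by step, from $M_1=T_1$ up to $M_\nu=M_J$, to obtain $H^{p,q}(\lg,J)\isom H^{p,q}(M_J)$. I do not anticipate a genuine obstacle: the entire point is the bookkeeping observation that rationality of the single endomorphism $J$ propagates to all the $J$-invariant subspaces $\kc^i_J\lg$, every other ingredient being already supplied by \cite{con-fin01} and the discussion preceding the statement. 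The one place to stay careful is that ``$\Gamma$-rational'' really is preserved under $\lh\mapsto J\lh$ (for rational $J$) and under sums of subspaces, and that the index reversal genuinely yields an ascending filtration beginning at $0$.
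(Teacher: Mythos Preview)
Your proposal is correct and follows essentially the same approach as the paper: the paragraph immediately preceding the corollary already records that the $J$-adapted descending central series $\kc^i_J\lg=\kc^i\lg+J\kc^i\lg$ satisfies conditions $(b)$ and $(c)$ by \cite[Lemma~1]{con-fin01}, and that rationality of $J$ forces the $\kc^i_J\lg$ to be $\Gamma$-rational, so that the Console--Fino theorem applies. Your write-up merely makes the rationality-propagation and the reindexing explicit, which is fine.
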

This result is very useful, since if one is looking for specific examples usually  everthing can be chosen to be rational.

\subsection{Console and Fino's result on openness}
In the last section we have seen that we can compute Dolbeault cohomology with left-invariant forms whenever we have some control over the geometry of $M_J$. Using deformation theoretic methods one can go further.

Recall that the datum of a complex structure $J:\lg\to \lg$ is equivalent to specifying the subspace $\einsnull\lg\subset \lg_\IC$. So the set of left-invariant complex structures can be identified with the subset 
\[\kc(\lg)=\{ V\in \mathbb{G}r(n , \lg_\IC)\mid V\cap \bar V=0, [ V,V]\subset  V\}\]
of the Grassmannian of half-dimensional subspaces of $\lg_\IC$. The first condition ensures that $\lg_\IC=V\oplus\bar V$ and the second that the complex structure $J_V$ with the corresponding eigenspace decomposition is integrable.

The question when the universal cover decomposes as an iterated principal bundle as in \eqref{tower} has been studied by Cordero, Fern\'andez, Gray and Ugarte. Such left-invariant complex structures are called nilpotent and an algebraic characterisation has been given in \cite{cfgu00}.

Note that it is a hard problem to decide whether $\kc(\lg)\neq\varnothing$ for a given nilpotent Lie-algebra $\lg$.

\begin{theo}[{\cite[Theorem A]{con-fin01}}]\label{open}
Let  $U\subset \kc(\lg)$ be the subset of left-invariant complex structures $J$ for which the inclusion 
\[\phi_J: H^{p,q}(\lg, J)\into H^{p,q}(M_J)\]
is an isomorphism. Then $U$ is an open subset of $\kc(\lg)$.
\end{theo}

The strategy of the proof is to show  that the dimension of the complement of $H^{p,q}(\lg, J)$ in $H^{p,q}(M_J)$ is upper-semi-continuous and thus remains equal to zero in an open  neighbourhood of any point $J$ where $\phi_J$ is an isomorphism.

So to prove Conjecture \ref{conj} it would be sufficient to show that, for each connected component of $\kc(\lg)$, the subset $U$ as in the Theorem is non-empty and closed. Unfortunately Hodge-numbers do behave badly when going to the limit, especially for non-K\"ahler manifolds, so closedness is very difficult.

The set of rational complex structures is a good candidate to show that $U$ is non-empty and dense but it is not clear to me  whether $\kc(\lg)$ does always contain rational complex structures provided it is non-empty. Calculations suggest that this will not be the case but a concrete counterexample is complicated to write down.

\begin{rem}
In Corollary \ref{acp} we saw that the conjecture holds for abelian complex structure and complex parallelisable nilmanifolds. Small deformations of such structures have been studied in some detail and deformations of these are again left-invariant but in general neither abelian nor complex parallelisable (see Section \ref{defos} and \cite{con-fin-poon06, mpps06, rollenske08a}). In this way we can get more examples of interesting complex structures where the conjecture still holds. 
\end{rem}

\subsection{Some new results and open questions}\label{new}
In this section we first present a result that any nilmanifold with left-invariant complex structure is not too far away from satisfying Conjecture \ref{conj}, it suffices to take a finite quotient. This result is new and might lead to a complete proof; we will discuss some possible approaches below.

We first need a lemma that exploits the especially simple arithmetics of lattices in nilpotent Lie groups.
\begin{lem}\label{biglatt}
Let $\lg$ be a nilpotent real Lie algebra, $\Gamma\subset G$ a lattice and $\lg_\IQ$ the rational structure associated to $\log\Gamma$. Then for any $x\in \lg_\IQ$ there exists a lattice $\Gamma'$ such that $\Gamma\subset \Gamma'$ of finite index  and $\exp(x)\in  \Gamma'$. 
\end{lem}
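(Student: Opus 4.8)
The plan is to use the explicit structure theory of lattices in nilpotent Lie groups, in particular the fact that a lattice $\Gamma$ in $G$ always admits a so-called \emph{Malcev basis} adapted to the upper central series, so that in exponential coordinates $\Gamma = \exp(\log\Gamma)$ and the group operation is polynomial with rational coefficients determined by the structure constants. The key arithmetic point is the following: if $\Gamma_0\subset G$ is a subgroup generated by finitely many elements whose logarithms lie in $\lg_\IQ$, then $\Gamma_0$ is again a lattice, because its $\IQ$-span is a rational subalgebra of $\lg$ and the Baker--Campbell--Hausdorff formula shows the group it generates is discrete and cocompact (this is essentially \cite[Theorem 5.1.8, Theorem 5.1.11]{Cor-Green}). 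So I would set $\Gamma' := \langle \Gamma, \exp(x)\rangle$ and argue that this is the desired lattice.

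First I would reduce to showing $\Gamma'$ is discrete and that $\Gamma$ has finite index in it; cocompactness of $\Gamma'$ is then automatic since it contains the cocompact $\Gamma$. For discreteness, note that $\log\Gamma$ spans $\lg_\IQ$ over $\IQ$ and $x\in\lg_\IQ$, so all generators of $\Gamma'$ have logarithms in the fixed rational structure $\lg_\IQ$; hence $\Gamma'$ is contained in the group $G_\IQ := \exp(\lg_\IQ)$ (which is a subgroup of $G$ by BCH, since $\lg_\IQ$ is a subalgebra), and by the cited structure theory the subgroup of $G_\IQ$ generated by finitely many elements is finitely generated nilpotent and discrete in $G$. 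For the finite index: after clearing denominators there is a positive integer $m$ with $mx\in\log\Gamma$ in the sense that $\exp(mx)\in\Gamma$ — more precisely, using a Malcev basis $X_1,\dots,X_n$ of $\log\Gamma$ and writing $x=\sum q_i X_i$ with $q_i\in\IQ$, one picks $m$ a common denominator. Then $\Gamma'$ is generated by $\Gamma$ together with $\exp(x)$, whose $m$-th power lies in $\Gamma$; iterating this over the (finitely many) Malcev coordinates and using the polynomial (rational) form of the group law, one sees $\Gamma'\subset \frac{1}{m'}\Gamma$-type lattice for a suitable $m'$, which is commensurable with $\Gamma$, so $[\Gamma':\Gamma]<\infty$.

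The main obstacle, and the only genuinely non-formal step, is controlling the group generated by $\Gamma$ and $\exp(x)$: a priori one must check that adjoining a single element with rational logarithm does not produce elements arbitrarily close to the identity, i.e.\ that $\Gamma'$ stays discrete. This is where one really needs that the BCH products and commutators of elements of $G_\IQ$ stay in $G_\IQ$ with denominators bounded in terms of $\Gamma$ and $x$ alone — a consequence of the nilpotency of $\lg$ (the BCH series terminates) together with the integrality properties of a Malcev basis. Once that boundedness is in hand, $\Gamma'$ sits inside a fixed lattice commensurable with $\Gamma$, and both discreteness and finiteness of the index follow. I would phrase the argument so that all the quantitative bookkeeping is absorbed into a single appeal to \cite[Chapter 5]{Cor-Green}, keeping the proof short.
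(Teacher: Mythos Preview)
Your approach is essentially correct, but it differs from the paper's proof, which is considerably shorter. Instead of analysing $\langle \Gamma,\exp(x)\rangle$ directly via Malcev coordinates and bounded denominators, the paper invokes two ready-made facts from \cite{Cor-Green}: first, Lemma~5.1.10 gives a lattice $\tilde\Gamma$ containing $\exp(x)$ and inducing the \emph{same} rational structure $\lg_\IQ$ as $\Gamma$; second, Theorem~5.1.12 says that two lattices with the same rational structure are commensurable, so $\Gamma\cap\tilde\Gamma$ has finite index in both. One then sets $\Gamma'=\langle\Gamma,\tilde\Gamma\rangle$, which is discrete because it is generated over the common finite-index sublattice by finitely many coset representatives. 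The whole bounded-denominator bookkeeping you outline is thus absorbed into the existence of $\tilde\Gamma$, and the proof is three lines.

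What your route buys is transparency: you make explicit that the result rests on the termination of BCH and the rationality of the structure constants, whereas the paper's argument hides this inside the cited lemmas. One small slip to fix: the sentence ``there is a positive integer $m$ with $mx\in\log\Gamma$ in the sense that $\exp(mx)\in\Gamma$'' is not correct as stated---for non-abelian $\lg$, having integer coefficients in a Malcev basis for $mx$ does not force $\exp(mx)\in\Gamma$, since $\Gamma$ is described by Malcev coordinates of the second kind, not by $\exp$ of integer combinations. You effectively correct this yourself in the next clause (landing in a ``$\tfrac{1}{m'}\Gamma$-type lattice''), so just drop the misleading intermediate claim.
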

\begin{proof}
Pick any lattice $\tilde\Gamma$ containing $\exp(x)$ and inducing the same rational structure in $\lg$ as $\Gamma$. This is possible by \cite[Lemma 5.1.10]{Cor-Green}. Then by \cite[Theorem 5.1.12]{Cor-Green} $\Gamma\cap\tilde\Gamma$ is a lattice in $G$ which is of finite index in both $\Gamma$ and $\tilde\Gamma$. If we define $\Gamma'$ to be the subgroup of $G$ generated by $\Gamma$ and $\tilde\Gamma$ then $\Gamma'$ is again discrete and contains both $\exp(x)$ and $\Gamma$.\qed
\end{proof}

\begin{prop}\label{quotient}
Let $M_J=(\Gamma\backslash G, J)$ be a nilmanifold with left-invariant complex structure. Then there exists a lattice $\Gamma'\subset G$ with $\Gamma$  of finite index in  $\Gamma'$ such that
\[\phi_J:H^{p,q}(\lg)\isom H^{p,q}(\Gamma'\backslash G,J).\]
\end{prop}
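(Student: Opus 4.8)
The plan is to reduce Conjecture \ref{conj} for $M_J$ to Corollary \ref{rational}, i.e. to the case of a rational complex structure, by replacing $\Gamma$ with a larger lattice $\Gamma'$ with respect to which $J$ becomes rational. The complex structure $J$ itself does not change, only the lattice does; and since $H^{p,q}(\lg,J)$ is a purely algebraic object depending on $(\lg,J)$ alone, it is insensitive to this replacement. So it suffices to find a rational structure $\lg_{\IQ}'$ on $\lg$ that is preserved by $J$ and that arises from some lattice $\Gamma'\supset\Gamma$ of finite index.

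\textbf{Key steps.} First I would fix a basis $x_1,\dots,x_{2n}$ of the given rational structure $\lg_{\IQ}=\IQ\langle\log\Gamma\rangle$. Applying $J$ to these basis vectors produces vectors $Jx_1,\dots,Jx_{2n}\in\lg$; in general these do not lie in $\lg_{\IQ}$. Consider the real subspace $W=\lg_{\IQ}+J\lg_{\IQ}$ spanned over $\IR$ by all the $x_i$ and $Jx_i$ — this is all of $\lg$, but the point is the $\IQ$-span $\lg_{\IQ}' := \IQ\langle x_1,\dots,x_{2n},Jx_1,\dots,Jx_{2n}\rangle$, which is a finite-dimensional $\IQ$-vector space containing $\lg_{\IQ}$ and manifestly satisfying $J\lg_{\IQ}'=\lg_{\IQ}'$ (applying $J$ permutes a spanning set up to sign, since $J^2=-\id$). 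The subtle point is that $\lg_{\IQ}'$ need not be a \emph{subalgebra} over $\IQ$, so it is not yet a rational structure; I would enlarge it further by throwing in all iterated brackets of its elements, which is still a finite-dimensional $\IQ$-subspace because $\lg$ is nilpotent, and which is $J$-invariant provided one is a little careful — in fact it is cleanest to first note that the integrability of $J$ gives $[Jx,Jy]=[x,y]+J[Jx,y]+J[x,Jy]$, so brackets of $J$-images stay inside $\lg_{\IQ}'+J\lg_{\IQ}'=\lg_{\IQ}'$. This produces a $J$-invariant rational structure $\lg_{\IQ}''\supset\lg_{\IQ}$ on $\lg$.

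\textbf{From the rational structure to the lattice.} Having a $J$-invariant rational structure $\lg_{\IQ}''$ does not by itself finish the job: Corollary \ref{rational} is about the rational structure \emph{associated to a lattice}, and I must exhibit a lattice $\Gamma'$ with $\Gamma\subset\Gamma'$ of finite index whose associated rational structure is $\lg_{\IQ}''$. This is exactly what Lemma \ref{biglatt} is built for: choosing a finite generating set of $\lg_{\IQ}''$ over $\IQ$ lying inside $\lg_{\IQ}''$, I would apply Lemma \ref{biglatt} repeatedly (once per generator, each time replacing $\Gamma$ by the enlarged finite-index overlattice) to obtain a single lattice $\Gamma'$ of finite index over $\Gamma$ containing $\exp$ of each generator; since enlarging a lattice enlarges its associated rational structure, and since the generators already span $\lg_{\IQ}''$, the rational structure of $\Gamma'$ contains $\lg_{\IQ}''$, hence is $J$-invariant (any rational structure containing a $J$-invariant one that spans the same $\IR$-space is itself $J$-invariant, or one simply replaces $\lg_{\IQ}''$ by the rational structure of $\Gamma'$ and re-runs the argument — it is automatically $J$-stable since it contains $\lg_{\IQ}$ and $J\lg_{\IQ}$). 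Thus $J$ is rational with respect to $\Gamma'$, and Corollary \ref{rational} applied to $(\Gamma'\backslash G,J)$ yields $\phi_J\colon H^{p,q}(\lg,J)\isom H^{p,q}(\Gamma'\backslash G,J)$.

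\textbf{Main obstacle.} The genuinely delicate point, and the one I would be most careful about, is the interaction between $J$-invariance and being a \emph{rational subalgebra}: one must make sure that in enlarging $\lg_{\IQ}$ to make it $J$-stable one does not destroy rationality of the relevant structure constants, and conversely that the finite-index overlattice produced by Lemma \ref{biglatt} really has the desired $J$-invariant rational structure and not merely a larger one that happens to lose $J$-invariance. Using the integrability identity \eqref{nijenhuis} to keep brackets of $J$-images inside the span is the crucial mechanism that makes this work, and checking that the process terminates (finitely many iterated brackets) uses nilpotency of $\lg$ in an essential way. Everything else — the insensitivity of $H^{p,q}(\lg,J)$ to the lattice, the finite-index bookkeeping — is routine once this is in place.
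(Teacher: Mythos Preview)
Your approach has a fundamental gap that cannot be repaired. A rational structure on $\lg$ is by definition a $\IQ$-subalgebra $\lg_\IQ$ with $\lg_\IQ\otimes_\IQ\IR=\lg$; in particular $\dim_\IQ\lg_\IQ=\dim_\IR\lg=2n$. Your enlarged $\IQ$-span $\lg_\IQ'=\IQ\langle x_i,Jx_i\rangle$ will have $\IQ$-dimension strictly greater than $2n$ whenever $J$ is not already rational (for instance if $Jx_1=\sqrt{2}\,x_2$ then $x_2$ and $\sqrt{2}\,x_2$ are $\IQ$-independent), so $\lg_\IQ''$ is \emph{not} a rational structure: the natural map $\lg_\IQ''\otimes_\IQ\IR\to\lg$ has a nontrivial kernel. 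More decisively, any lattice $\Gamma'\supset\Gamma$ of finite index induces the \emph{same} rational structure as $\Gamma$ (commensurable lattices share their $\IQ$-span of logarithms), so no finite-index enlargement can ever make an irrational $J$ rational. This is also why Lemma~\ref{biglatt} cannot help you here: its hypothesis is $x\in\lg_\IQ$, the original rational structure, and it produces a lattice with that same rational structure; you cannot feed it elements of your larger $\lg_\IQ''$. Example~\ref{badex} with irrational $\lambda$ is a concrete case where your strategy fails outright.

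The paper's proof proceeds along entirely different lines. It fixes a left-invariant hermitian metric, identifies $H^{p,q}(M_J)$ with the $\Gamma$-invariant $\Delta_{\delbar}$-harmonic forms on $G$ and $H^{p,q}(\lg,J)$ with the $G$-invariant ones, and then argues by induction on the difference of dimensions: if some $\Gamma$-invariant harmonic form $\alpha$ is not $G$-invariant, one finds a rational $x\in\lg_\IQ$ with $\exp(x)^*\alpha\neq\alpha$ (density of $\exp(\lg_\IQ)$ in $G$), uses Lemma~\ref{biglatt} to adjoin $\exp(x)$ to the lattice, and thereby kills $\alpha$. The lattice grows but the rational structure---and hence the rationality status of $J$---never changes.
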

In other word, given any nilmanifold with left-invariant complex structure $M_J$ there is a finite regular covering $\pi:M_J\to M'_J$ such that the conjecture holds for $M_J'$.
\begin{proof}
Endow all involved bundles with left-invariant hermitian metrics. Then the Laplacian $\Delta_{\delbar}=\delbar\delbar^*+\delbar^*\delbar$ is a left-invariant elliptic differential operator on $G$. Let $\kh(G):=\ker(\Delta_{\delbar})$ be the space of harmonic forms of type $(p,q)$ on $G$. We can take invariants under $G$ and $\Gamma$ respectively and get
\[H^{p,q}(M)\isom \kh(G)^\Gamma\supset \kh(G)^G=H^{p,q}(\lg,J).\]
The last equality comes from the compatibility of the Hodge-decomposition with the subspace of left-invariant form; this hat been worked out in detail in \cite{rollenske09a}.

We prove our claim by induction on $d:=\dim \kh(G)^\Gamma-\dim \kh(G)^G$. If $d=0$ we can take $\Gamma'=\Gamma$.

If $d>0$ there exists an $\alpha\in \kh(G)^\Gamma$ and an open subset $U\subset G$ such that
\[g^*\alpha\neq\alpha\]
for $g\in U$.
Let $\lg_\IQ$ be the rational structure induced by $\log(\Gamma)\subset \lg$. Since the exponential map is a diffeomorphism the image of $\lg_\IQ$ is dense in $G$ and we can find an $x\in \lg_\IQ$ such that $\exp (x)\in U$.

By Lemma \ref{biglatt} we can find a lattice $\Gamma'\subset G$ such that $\Gamma\subset \Gamma'$ of finite index and $\exp(x)\in  \Gamma'$; then $\alpha \notin \kh(G)^{\Gamma'}=H^{p,q}( \Gamma'\backslash G, J)$ and we conclude by induction.\qed
\end{proof}

\begin{rem} Proposition \ref{quotient} suggested an approach that unfortunately did not prove successful.
Assume we have constructed for a nilmanifold with left-invariant complex structure $M_J$ a lattice $\Gamma\subset\Gamma'$ as above and then manage to find a way to scale it down, i.e., to find a contracting automorphism $\mu$ of $G$ such that $\mu(\Gamma')=\tilde\Gamma'\subset \Gamma$. This is possible if $\lg$ is naturally graded but not in general \cite{dyer70}. On the level of real manifolds this corresponds to two regular coverings
\[ \tilde M'=\tilde\Gamma'\backslash G\to M \to M'=\Gamma'\backslash G\]
and a (different) isomorphism $\mu:M'\isom\tilde  M'$. 

If $\mu$ preserves the complex structure, i.e., $M'_J$ and $\tilde M'_J$ are isomorphic as complex manifolds then the injections
\[H^{p,q}(\lg, J)=H^{p,q}(M'_J)\into H^{p,q}(M_J)\into H^{p,q}(\tilde M'_J)=H^{p,q}(\lg, J)\]
prove the conjecture for $M_J$. But this will generally not be the case, as can be worked out for the Lie-algebra given in Example \ref{badex}.
\end{rem}

\begin{rem}
We have seen that Conjecture \ref{conj} holds if we understand the complex geometry of a nilmanifold with left-invariant complex structure $M_J$. In addition we have the openness result of Console and Fino. Nevertheless the general case remains open.

There are two other approaches one could try: in the proof of Proposition \ref{quotient} we compared $G$-invariant and $\Gamma$-invariant $\Delta_\delbar$-harmonic differential forms on the universal cover $G$ after choosing some left-invariant hermitian structure. The study of this elliptic operator falls into the realm of harmonic analysis but there does not seem to be a general result that shows that $\Gamma$-invariant harmonic forms are $G$-invariant. One problem is again that $\Delta_\delbar$ does not need to have any compatibility with the natural filtrations on $\lg$ but working on $G$ we might avoid the issue of rationality.

Going back to the compact manifold $M_J$ one might try to use some Weitzenb\"ock formula to express $\Delta_\delbar$ in a different way. But since $M_J$ is in general not K\"ahler the Chern-connection compatible with the hermitian structure will differ from the Levi-Civita connection and again there does not seem to be an applicable general formula at the moment. In this context Gromov's characterisation of nilmanifolds as almost flat manifolds \cite{gromov78} might play an important role.
\end{rem}

\section{Applications}\label{apps}
As mentioned in the introduction, nilmanifolds can be a convenient source of examples in many contexts. Integrability conditions for additional left-invariant geometric structures usually boil down to linear algebra and thus one easily writes down interesting examples of complex, riemannian, hermitian or symplectic structures. Proceeding from the examples to general results is more difficult.

Here we will discuss two further applications related to complex structures references to other areas have already been given in the introduction.

\subsection{Prescribing cohomology behaviour and the Fr\"olicher spectral sequence}
If Conjecture \ref{conj} holds for a nilmanifold with left-invariant complex structure $M_J$ the computation if its Dolbeault cohomology
$H^{p,q}(M_J)=H^{p,q}(\lg, J)$ 
is a matter of finite-dimensional linear algebra and can be taught to a computer algebra system. In addition this makes it possible to study the Fr\"olicher spectral sequence
\[ E_2^{p,q}=H^{p,q}(M_J)\implies H^{p+q}_{dR}(M, \IC),\]
that measures the difference between Dolbeault cohomology and de Rham cohomology. This spectral sequence degenerate at $E_1$ for all compact complex surfaces but Cordero, Fern\'andez, Gray and Ugarte showed in \cite{cfgu99}, studying nilmanifolds,  that for complex 3-folds the maximal non-degeneracy $E_2\ncong E_3=E_\infty$ is possible. Later we constructed a family $X_n\to T_n$ of principal torus bundles over  tori such that $d_n\neq0$ for $X_n$ (see \cite{rollenske07a}). Probably, starting from dimension 3, the maximal non-degeneracy is possible but concrete examples are still missing. If we ask in addition for simply connected manifolds there are only very few examples with non-zero higher differentials known \cite{pittie89}.

The idea behind  these examples is that  if we write down some 1-forms and their differentials carefully enough we get a nilmanifold supporting these forms for free. For example, let $V$, $W$ be two complex vector spaces  and give an arbitrary map
\[\delta: W^* \to \Lambda^2 V^*\tensor ( V^*\tensor \bar V^*).\]
Setting $\einsnull\lg=V\oplus W$ and $\lg_\IC:=\einsnull \lg\oplus \overline{\einsnull \lg}$ we extend $\delta$ to a map 
\[d:\lg^*_\IC\to \Lambda^2\lg^*_\IC\]
 which is zero on $V^*\oplus \bar V^*$ and $\delta+\bar\delta$ on $W^*\oplus \bar W^*$.
There is a natural real vector space $\lg=\{z+\bar z\mid z\in \einsnull{\lg}\}\subset \lg_\IC$ and via the identity 
\[ d\alpha(x,y)=-\alpha([x,y])\quad \text{for $\alpha\in \lg^*$ and $x,y\in \lg$}\]
the vector space $\lg$ becomes a 2-step nilpotent Lie-algebra. The decomposition of $\lg_\IC$ defines an almost complex structure $J$ on $\lg$ which is integrable by our choice that $\delta$ has no component mapping to $\Lambda^2\bar V$. If we have chosen $\delta$ such that the structure constants of $\lg$ turn out to be rational there exists a lattice in the associated nilpotent Lie-group and we have constructed a nilmanifold $M_J$ with left-invariant complex structure. 

Nearly by definition $M_J$ is a principal holomorphic torus bundle over a torus and thus we not only have prescribed the differential of some 1-forms quite arbitrarily but our datum encodes in fact the whole cohomology algebra.

Constructing nilmanifolds with higher nilpotency index in a similar way is more tedious since one has to take care of the Jacobi identity, equivalent to $d^2=0$, as well.

\subsection{Deformations of complex structures}\label{defos}
Our main motivation to study Conjecture \ref{conj} was the question if small deformations of  left-invariant complex structures remain left-invariant. Generalising results of Console, Fino and Poon \cite{con-fin-poon06} (see also \cite{mpps06}) we proved

\begin{theo}[{\cite[Theorem 2.6]{rollenske09a}}]
 If Conjecture \ref{conj} holds for a nilmanifold with left-invariant complex structure $M_J$ then all sufficiently small deformations of $J$ are again left-invariant complex structures.
\end{theo}
The idea of the proof is that small deformations of $J$ are controlled by the first and second cohomology groups of the holomorphic tangent bundle. By constructing a version of Serre-duality that works purely on the level of Lie-algebra cohomology one can represent the elements of $H^i(M_J, \mathcal{T}_{M_J})$ by left-invariant forms and the result follows by the standard inductive construction of the Kuranishi space \cite{kuranishi62}.

The space of all integrable complex structures on a nilmanifold $M$ modulo orientation preserving diffeomorphisms isotopic to the identity is called Teichm\"uller space  $\mathfrak{T}(M)$. It is (locally) a complex analytic space, the germ at a fixed complex structure $J$ being the Kuranishi space of $(M,J)$. Thus the theorem says that, under the assumption of Conjecture \ref{conj}, the set of left-invariant complex structures is open in $\mathfrak{T}(M)$.

If the Lie algebra $\lg$ of $M$ admits a stable (principal) torus bundle series (see Definition \ref{stableseries}) then Conjecture \ref{conj} holds for all left-invariant complex structures on $\lg$ and it is natural to ask if the set of left-invariant complex structures is also closed. The starting point  in this direction is Catanese's result that all deformations in the large of a complex torus are complex tori \cite{catanese02}. Generalising results of Catanese and Frediani \cite{catanese04, cat-fred06} this was extended in \cite{rollenske09b} to a large class of nilmanifolds with left-invariant complex structure. As an example we would like to mention that every deformation in the large of the Iwasawa manifold is a nilmanifold with left-invariant complex structure; in this case the topology of the space of left-invariant complex structures is known \cite{ket-sal04}.

In this area many interesting questions remain open, we hope to address some of these in future work. Progress in the direction of Conjecture \ref{conj} would encourage our belief that the complex geometry of nilmanifolds with left-invariant complex structure can be completely understood via linear algebra.

\subsection*{acknowledgement}
We would like to thank the organisers for the stimulating conference and the invitation to contribute to this volume. We enjoyed several discussion on this topic with Fabrizio Catanese and Uwe Semmelmann. Anna Fino provided some interesting references to the literature. Careful remarks by the referee helped to improve the presentation.
During the preparation of this article the author was supported by the Hausdorff Centre for Mathematics in Bonn.


\begin{thebibliography}{10}


\bibitem{ags01}
Abbena, E., Garbiero, S., Salamon, S.: Almost {H}ermitian geometry on six
  dimensional nilmanifolds.
\newblock Ann. Scuola Norm. Sup. Pisa Cl. Sci. (4) \textbf{30}(1), 147--170
  (2001).

\bibitem{andrada-salamon05}
Andrada, A., Salamon, S.: Complex product structures on {L}ie algebras.
\newblock Forum Math. \textbf{17}(2), 261--295 (2005).

\bibitem{barberis99}
Barberis, M.L.: Affine connections on homogeneous hypercomplex manifolds.
\newblock J. Geom. Phys. \textbf{32}(1), 1--13 (1999).

\bibitem{bdv09}
Barberis, M.L., Dotti, I.G., Verbitsky, M.: Canonical bundles of complex
  nilmanifolds, with applications to hypercomplex geometry.
\newblock Math. Res. Lett. \textbf{16}(2), 331--347 (2009)

\bibitem{ben-gor88}
Benson, C., Gordon, C.S.: K\"ahler and symplectic structures on nilmanifolds.
\newblock Topology \textbf{27}(4), 513--518 (1988)

\bibitem{buser-karcher81}
Buser, P., Karcher, H.: Gromov's almost flat manifolds, \emph{Ast\'erisque},
  vol.~81.
\newblock Soci\'et\'e Math\'ematique de France, Paris (1981)

\bibitem{catanese02}
Catanese, F.: Deformation types of real and complex manifolds.
\newblock In: Contemporary trends in algebraic geometry and algebraic topology
  (Tianjin, 2000), \emph{Nankai Tracts Math.}, vol.~5, pp. 195--238. World Sci.
  Publ., River Edge, NJ (2002)

\bibitem{catanese04}
Catanese, F.: Deformation in the large of some complex manifolds. {I}.
\newblock Ann. Mat. Pura Appl. (4) \textbf{183}(3), 261--289 (2004)

\bibitem{cat-fred06}
Catanese, F., Frediani, P.: Deformation in the large of some complex manifolds.
  {II}.
\newblock In: Recent progress on some problems in several complex variables and
  partial differential equations, \emph{Contemp. Math.}, vol. 400, pp. 21--41.
  Amer. Math. Soc., Providence, RI (2006)

\bibitem{che-eil48}
Chevalley, C., Eilenberg, S.: Cohomology theory of {L}ie groups and {L}ie
  algebras.
\newblock Trans. Amer. Math. Soc. \textbf{63}, 85--124 (1948)

\bibitem{con-fin01}
Console, S., Fino, A.: Dolbeault cohomology of compact nilmanifolds.
\newblock Transform. Groups \textbf{6}(2), 111--124 (2001)

\bibitem{con-fin09}
Console, S., Fino, A.: On the de {R}ham cohomology of solvmanifolds (2009)

\bibitem{con-fin-poon06}
Console, S., Fino, A., Poon, Y.S.: Stability of abelian complex structures.
\newblock Internat. J. Math. \textbf{17}(4), 401--416 (2006)

\bibitem{cfgu00}
Cordero, L.A., Fern{\'a}ndez, M., Gray, A., Ugarte, L.: Compact nilmanifolds
  with nilpotent complex structures: {D}olbeault cohomology.
\newblock Trans. Amer. Math. Soc. \textbf{352}(12), 5405--5433 (2000)

\bibitem{cfgu99}
Cordero, L.A., Fernandez, M., Ugarte, L., Gray, A.: Fr\"olicher spectral
  sequence of compact nilmanifolds with nilpotent complex structure.
\newblock In: New developments in differential geometry, Budapest 1996, pp.
  77--102. Kluwer Acad. Publ., Dordrecht (1999)

\bibitem{Cor-Green}
Corwin, L.J., Greenleaf, F.P.: Representations of nilpotent {L}ie groups and
  their applications. {P}art {I}, \emph{Cambridge Studies in Advanced
  Mathematics}, vol.~18.
\newblock Cambridge University Press, Cambridge (1990).

\bibitem{dotti-fino00}
Dotti, I.G., Fino, A.: Hypercomplex nilpotent {L}ie groups.
\newblock In: Global differential geometry: the mathematical legacy of {A}lfred
  {G}ray ({B}ilbao, 2000), \emph{Contemp. Math.}, vol. 288, pp. 310--314. Amer.
  Math. Soc., Providence, RI (2001)

\bibitem{dyer70}
Dyer, J.L.: A nilpotent {L}ie algebra with nilpotent automorphism group.
\newblock Bull. Amer. Math. Soc. \textbf{76}, 52--56 (1970)

\bibitem{MR2542937}
Fern{\'a}ndez, M., Ivanov, S., Ugarte, L., Villacampa, R.: Compact
  supersymmetric solutions of the heterotic equations of motion in dimension 5.
\newblock Nuclear Phys. B \textbf{820}(1-2), 483--502 (2009).


\bibitem{gmpt07}
 Gra\~na, M.,   Minasian, R.,   Petrini, M.,  Tomasiello, A.: A scan for new N = 1 vacua on
twisted tori, \newblock JHEP 05 (2007) 031, hep-th/0609124.


\bibitem{green-tao06}
Green, B., Tao, T.: Linear equations in primes (2006), arXiv:math/0606088v2. 

\bibitem{gromov78}
Gromov, M.: Almost flat manifolds.
\newblock J. Differential Geom. \textbf{13}(2), 231--241 (1978).

\bibitem{guan07}
Guan, D.: Modification and the cohomology groups of compact solvmanifolds.
\newblock Electron. Res. Announc. Amer. Math. Soc. \textbf{13}, 74--81
  (electronic) (2007).

\bibitem{Hirzebruch}
Hirzebruch, F.: Topological methods in algebraic geometry.
\newblock Classics in Mathematics. Springer-Verlag, Berlin (1995).

\bibitem{host-kra05}
Host, B., Kra, B.: Nonconventional ergodic averages and nilmanifolds.
\newblock Ann. of Math. (2) \textbf{161}(1), 397--488 (2005).

\bibitem{ket-sal04}
Ketsetzis, G., Salamon, S.: Complex structures on the {I}wasawa manifold.
\newblock Adv. Geom. \textbf{4}(2), 165--179 (2004)

\bibitem{Knapp}
Knapp, A.W.: Lie groups beyond an introduction, \emph{Progress in Mathematics},
  vol. 140, second edn.
\newblock Birkh\"auser Boston Inc., Boston, MA (2002)

\bibitem{Kob-NumII}
Kobayashi, S., Nomizu, K.: Foundations of differential geometry. {V}ol. {II}.
\newblock Interscience Tracts in Pure and Applied Mathematics, No. 15 Vol. II.
  Interscience Publishers John Wiley \& Sons, Inc., New York-London-Sydney
  (1969)

\bibitem{kodaira66}
Kodaira, K.: On the structure of compact complex analytic surfaces. {II}.
\newblock Amer. J. Math. \textbf{88}, 682--721 (1966)

\bibitem{kuranishi62}
Kuranishi, M.: On the locally complete families of complex analytic structures.
\newblock Ann. of Math. (2) \textbf{75}, 536--577 (1962)

\bibitem{Birkenhake-Lange}
Lange, H., Birkenhake, C.: Complex abelian varieties, \emph{Grundlehren der
  Mathematischen Wissenschaften}, vol. 302.
\newblock Springer-Verlag, Berlin (1992)

\bibitem{lauret06}
Lauret, J.: A canonical compatible metric for geometric structures on
  nilmanifolds.
\newblock Ann. Global Anal. Geom. \textbf{30}(2), 107--138 (2006)

\bibitem{mpps06}
Maclaughlin, C., Pedersen, H., Poon, Y.S., Salamon, S.: Deformation of 2-step
  nilmanifolds with abelian complex structures.
\newblock J. London Math. Soc. (2) \textbf{73}(1), 173--193 (2006)

\bibitem{mostow61}
Mostow, G.D.: Cohomology of topological groups and solvmanifolds.
\newblock Ann. of Math. (2) \textbf{73}, 20--48 (1961)

\bibitem{nakamura75}
Nakamura, I.: Complex parallelisable manifolds and their small deformations.
\newblock J. Differential Geometry \textbf{10}, 85--112 (1975)

\bibitem{nomizu54}
Nomizu, K.: On the cohomology of compact homogeneous spaces of nilpotent {L}ie
  groups.
\newblock Ann. of Math. (2) \textbf{59}, 531--538 (1954)

\bibitem{pittie89}
Pittie, H.V.: The nondegeneration of the {H}odge-de {R}ham spectral sequence.
\newblock Bull. Amer. Math. Soc. (N.S.) \textbf{20}(1), 19--22 (1989)

\bibitem{rollenske07a}
Rollenske, S.: The {F}r\"olicher spectral sequence can be arbitrarily
  non-degenerate.
\newblock Math. Ann. \textbf{341}(3), 623--628 (2008).

\bibitem{rollenske08a}
Rollenske, S.: The {K}uranishi space of complex parallelisable nilmanifolds
  (2008).
\newblock arXiv:0803.2048, to appear in JEMS.

\bibitem{rollenske09b}
Rollenske, S.: Geometry of nilmanifolds with left-invariant complex structure
  and deformations in the large.
\newblock Proc. Lond. Math. Soc. (3) \textbf{99}(2), 425--460 (2009).

\bibitem{rollenske09a}
Rollenske, S.: Lie-algebra {D}olbeault-cohomology and small deformations of
  nilmanifolds.
\newblock J. Lond. Math. Soc. (2) \textbf{79}(2), 346--362 (2009).

\bibitem{salamon01}
Salamon, S.M.: Complex structures on nilpotent {L}ie algebras.
\newblock J. Pure Appl. Algebra \textbf{157}(2-3), 311--333 (2001)

\bibitem{ugarte07}
Ugarte, L.: Hermitian structures on six-dimensional nilmanifolds.
\newblock Transform. Groups \textbf{12}(1), 175--202 (2007)

\bibitem{VinGorbShvart}
Vinberg, E.B., Gorbatsevich, V.V., Shvartsman, O.V.: Discrete subgroups of
  {L}ie groups.
\newblock In: Lie groups and Lie algebras, II, \emph{Encyclopaedia Math. Sci.},
  vol.~21, pp. 1--123, 217--223. Springer, Berlin (2000)

\bibitem{Weibel}
Weibel, C.A.: An introduction to homological algebra, \emph{Cambridge Studies
  in Advanced Mathematics}, vol.~38.
\newblock Cambridge University Press, Cambridge (1994)

\end{thebibliography}
\end{document}